\newtheorem{theorem}{Theorem}
\newtheorem{definition}{Definition}
\newtheorem{lemma}{Lemma}
\newtheorem{corollary}{Corollary}
\theoremstyle{definition}
\newtheorem{assumption}{Assumption}
\newtheorem{remark}{Remark}
\title{Stochastic Artificial Potentials for Online Safe Navigation}
\author{Santiago Paternain and Alejandro Ribeiro
\thanks{Work in this paper is supported by NSF CNS-1302222 and ONR N00014-12-1-0997. The authors are with the Department of Electrical and Systems Engineering, University of Pennsylvania, 200 South 33rd Street, Philadelphia, PA 19104. Email: \{spater, aribeiro\}@seas.upenn.edu.}}
\begin{document}

\maketitle
\thispagestyle{empty}
\pagestyle{empty}

%
\begin{abstract}
  Consider a convex set of which we remove an arbitrarily number of disjoints convex sets -- the obstacles -- and a convex function whose minimum is the agent's goal. We consider a local and stochastic approximation of the gradient of a Rimon-Koditschek navigation function where the attractive potential is the convex function that the agent is minimizing. In particular we show that if the estimate available to the agent is unbiased convergence to the desired destination while obstacle avoidance is guaranteed with probability one under the same geometrical conditions than in the deterministic case. Qualitatively these conditions are that the ratio of the maximum over the minimum eigenvalue of the Hessian of the objective function is not too large and that the obstacles are not too flat or too close to the desired destination. Moreover, we show that for biased estimates a similar result holds under some assumptions on the bias. These assumptions are motivated by the study of the estimate of the gradient of a Rimon-Koditschek navigation function for sensor models that fit circles or ellipses around the obstacles. Numerical examples explore the practical value of these theoretical results.
\end{abstract}
%
%

\section{Introduction}
The problem of navigating towards a desired goal configuration has been extensively studied in the robotics community. In the particular case where the set of available configurations to the robot is convex it is possible to reach the desired configuration by implementing a gradient controller (see e.g. \cite{hirsch2004differential}). The main advantages of such controllers are their simplicity and the fact that they rely only on local information, this is, in the gradient of a function whose minimum is the goal configuration. 

A much more complex setting is one in which the workspace is cluttered by obstacles that must be avoided by the agent. Solutions to this problem have been provided in the form of artificial potentials, see for instance \cite{koditschek1990robot,  rimon1992exact,khatib1980commande, Khatib:1986:ROA:6806.6812, lozano1987handey, newman1987high,barraquand1992numerical, khosla1988superquadric, barraquand1990monte,connolly1990path,krogh1984generalized,warren1989global,lionis2007locally,lionis2008towards,filippidis2011adjustable,filippidis2012navigation,filippidis2013navigation}. The main idea of this approach is to combine the attractive potential with repulsive fields that push the agent away of the boundary of the obstacles. With proper design -- and restring the geometry of the obstacles to certain classes -- it is possible to construct a potential that attains its maximum at the boundary of the obstacles and with a unique minimum at the goal configuration. Therefore ensuring non collision with the obstacles and convergence to the desired destination from almost every initial configuration when following the negative gradient of this potential. The existence guarantees of such functions -- termed navigation functions -- is highly dependent on the geometry of the free space. For instance for artificial potentials of the  Rimon-Koditschek form introduced in \cite{koditschek1990robot} the above properties can be guaranteed in the case of focally admissible obstacles \cite{filippidis2013navigation} of which spherical worlds considered in the original work \cite{koditschek1990robot} are a particular case. This said, by implementing a suitable diffeomorphism it is possible to extend the results of \cite{koditschek1990robot} to star worlds \cite{rimon1992exact,rimon1991construction} thus extending considerably the families of free spaces that can be navigated. Different families of navigation functions can be constructed, such is the case of navigation function based in harmonic functions which allow navigation in topologically complex three dimensional spaces \cite{loizou2011closed,loizou2012navigation}. The latter construction needs the free space to be diffeomorphically mapped to a reference world. In that sense the navigation framework lack the advantage of pure gradient controllers: these cannot be implemented locally as they necessitate access to some amount of global information. Efforts in overcoming this limitation have been pursued, in particular through the use of polynomial navigation functions in the case of two-dimensional configuration spaces with convex obstacles \cite{lionis2007locally, lionis2008towards} and in $n$ dimensional configuration spaces with spherical obstacles \cite{filippidis2011adjustable}.
%

In the navigation function framework typically the goal configuration is provided to the robot and therefore a rotational symmetric attractive potential can be considered. However, in some settings it is desirable to provide the configuration goal as the minimum -- or maximum -- of an objective function instead of the configuration itself. Consider for instance the hill climbing problem in which an agent can sense its way ``up'' by following the slope of the terrain estimated by an inertial measurement unit (IMU). It is more reasonable to solve the problem as navigating towards the top of the hill following its slope --and reaching a point where the slope becomes zero-- as compared as navigating towards a given location. This is especially true if the height profile of the hill is unknown or if the interest is on building a system that is independent of the particular hill under consideration. Generally speaking reaching the minimum of an unknown function is a desirable capability for robots to perform complex missions such as environmental monitoring \cite{ogren2004cooperative,sukhatme2007design}, surveillance and reconnaissance \cite{rybski2000team} and search and rescue operations \cite{kumar2004robot}. The problem of navigating towards the minimum of a convex function in a space with convex holes is studied \cite{PaternainEtal15}, where generic conditions are presented to ensure that a Rimon-Koditschek navigation function can be constructed when the attractive potential is a generic convex function rather than the squared of the distance to a desired configuration for a workspace with convex obstacles. The qualitative implication of this conditions is that Rimon-Koditschek have a unique minimum when one of the following conditions are met. (i) The condition number of the Hessian of the attractive potential is not large and the obstacles are not too flat. (ii) The distance from the obstacles' boundary to the minimum of the attractive potential is large relative to the size of the obstacle. These conditions are compatible with the definition of sufficiently curved worlds in \cite{filippidis2012navigation}.    
 
In \cite{PaternainEtal15} it is assumed that the information about the objective function and the obstacles is exact. However, this is not the case in systems where the magnitudes that the robot needs to build the navigation function are gathered by sensors and therefore the measurements have errors in the form of noise. In that sense the objective of this work is to generalize the results in \cite{PaternainEtal15} to stochastic scenarios, understood as a setting in which the sensorial information available to the agent comes from a probability distribution instead of being deterministic (Section \ref{sec_problem_formulation}). In particular we show that if the agent is able to construct an unbiased estimate of the gradient of the navigation function, convergence to the minimum of the objective function can be ensured with probability one as well as collision avoidance (Theorem \ref{theo_unbiased} Section \ref{sec_unbiased}). Moreover, there might be a mismatch between the model that the agent has of the environment and the real one. This mismatch translates into the fact that estimates of the gradient of the navigation function are not unbiased. Hence we devote Section \ref{sec_biased} to this end. In particular, we show that if in a neighborhood of the saddle points of the navigation function the bias is small the same theoretical guaranties as in the unbiased case can be provided (Theorem \ref{theo_unbiased}). The previous technical hypothesis is motivated by the study of particular sensor models in Section \ref{sec_sensor_model}. The practical implications of these theoretical conclusions are explored in numerical simulations (Section \ref{sec_numerical_example}) in which we consider the problem of reaching the minimum of non rotational symmetric potentials in a space where the obstacles are ellipses (Section \ref{sec_elliptical_obstacles}) and where the obstacles are egg shaped as an example of a generic convex obstacle (Section \ref{sec_egg_shaped}).

%
\section{Problem formulation}\label{sec_problem_formulation}
In this work we are interested in navigating towards the minimum of a convex potential in a space with convex holes in cases where the information available to the agent about the potential and the space is local and inexact. To be formal, define the workspace $\ccalX \subset \mathbb{R}^n$ as a non empty convex compact set and consider a set of $m\in \mathbb{N}$ obstacles $\ccalO_i \subset \ccalX$  that we define as non empty, open, strongly convex sets with smooth boundary $\partial \ccalO_i$. The obstacles are such that they do not intersect with each other or with the boundary of the workspace. We formalize these assumptions next. 
%
%
\begin{assumption}[\bf Obstacles do not intersect]\label{as_obstacles}
The workspace and the obstacles are such that the obstacles and its boundaries are contained in the interior of the workspace
\begin{equation}
\left(\ccalO_i\cup \partial\ccalO_i \right)\subset int(\ccalX) \quad \mbox{for all} \quad i=1\ldots m,
\end{equation}
and the obstacles do not intersect with each other
\begin{equation}
\left(\ccalO_i\cup \partial\ccalO_i \right)\cap \left(\ccalO_j\cup \partial\ccalO_j \right)= \emptyset\quad  \forall i,j=1\ldots m, i\neq j.
\end{equation}
\end{assumption}
%
The free space $\ccalF$ represents the points of the workspace that are accessible to the agent, i.e., the set difference between the workspace and the obstacles. We formally define this set next.  
%
%
\begin{definition}\label{def_freespace}
The free space $\ccalF\subset \mathbb{R}^n$ is the set given by
\begin{equation}\label{eqn_freespace}
\ccalF = \ccalX \setminus \bigcup_{i=1}^m \ccalO_i.
\end{equation}
\end{definition}
%
%
Let $f_0: \ccalX\to \mathbb{R}_+$ be a convex function such that its minimum is the agent's goal. Then the problem of interest is  to navigate the free space $\ccalF$ towards the minimum of the convex potential $f_0(x)$ from all initial positions. Formally, this is finding a sequence 
\begin{equation}\label{eqn_navigation_problem}
\left\{x_t \in \ccalF, t \in \mathbb{N}\cup \left\{0\right\}\right\} \quad \mbox{such that}  \quad \lim_{t\to\infty} x_t =x^*,
\end{equation}
where $x^* = \argmin f_0(x)$. For such a problem to be feasible we need the minimum of the potential to be in the free space. We also require the objective function to be twice continuously differentiable and strongly convex. We formalize these assumptions about the objective function next.
\begin{assumption}[\bf Objective function]\label{assum_objective_function}
The objective function $f_0(x)$ is such that:
\begin{mylist}
\item[\bf Optimal point]
The minimum $x^*$ of the objective function is such that $f_0(x^*)\geq 0$ and it is in the interior of the free space,
\begin{equation}
x^* \in int(\ccalF).
\end{equation} 
\item[\bf Twice continuously differentiable and strongly convex] 
The objective function is twice continuously differentiable and strongly convex in $\ccalX$. These assumptions in addition to the fact that the workspace is compact imply that the eigenvalues of the Hessian $\nabla^2 f_0(x)$ are contained in the interval $[\lambda_{\min}, \lambda_{\max}]$ for all $x\in\ccalF$, with $0 < \lambda_{\min}$.
\end{mylist}
\end{assumption}
In cases where exact information about the objective function and complete information about the obstacles is available to the agent, it is possible --under mild conditions about the geometry of the free space and the objective function-- to build a navigation function \cite{PaternainEtal15}. An agent that follows the flow given by the negative gradient of a navigation function converges to the destination $x^*$ without running into the free space boundary for a set of initial conditions that is dense in the free space\cite{koditschek1988strict}. Thus solving problem \eqref{eqn_navigation_problem}. For completeness we provide here the definition of a navigation function as well as a different characterization of the free space that is useful to the navigation function framework.
%
\begin{definition}[\bf{Navigation Function}]\label{def_navigation_function} 
Let $\mathcal{F} \subset \mathbb{R}^n$ be a compact connected analytic manifold with boundary. A map $\varphi : \mathcal{F} \rightarrow [0,1]$, is a navigation function in $\ccalF$ if:
\begin{mylist}
\item[{\bf Differentiable.}] It is twice continuously differentiable in $\mathcal{F}$.
\item[{\bf Polar at $x^*$.}] It has a unique minimum at $x^*$ which belongs to the interior of the free space, i.e., $x^* \in \mbox{int}(\mathcal{F})$.
\item [{\bf Morse.}] It has non degenerate critical points on $\ccalF$.
\item [{\bf Admissible.}] All boundary components have the same maximal value, namely $\partial \mathcal{F} = \varphi^{-1}(1)$\footnote{For a function $f(x)$ we denote its inverse by $f^{-1}(x)$.}.
\end{mylist}
\end{definition}
%
%
Since the workspace $\ccalX$ is a convex set, there exists a concave function $\beta_0: \mathbb{R}^n \to \mathbb{R}$ such that $x\in \ccalX$ if and only if $\beta_0(x)\geq 0$. Such a function exists because super level sets of concave functions are convex. Likewise we can define convex functions $\beta_i(x): \mathbb{R}^n\to \mathbb{R}$ for $i=1\ldots m$ such that $\beta_i(x) \leq 0$ if and only if $x\in \ccalO_i \cup \partial \ccalO_i$. Since the obstacles $\ccalO_i$ are smooth and strongly convex the Hessian of the function $\beta_i(x)$ is well defined and its eigenvalues are lower bounded by $\mu^i_{\min}>0$. Define then the following product function $\beta:\mathbb{R}^n \to \mathbb{R}$  
\begin{equation}\label{eqn_beta}
\beta(x) = \prod_{i=0}^m \beta_i(x).
\end{equation}
The interest in defining the above function is that it is possible to characterize the free space as the set for which $\beta(x)$ is nonnegative, in particular its boundary are the points satisfying $\beta(x) =0$. With this characterization of the free space one can define the following Rimon Koditschek artificial potential 
\begin{equation}\label{eqn_navigation_function}
\varphi_k(x) = \frac{f_0(x)}{\left(f_0^k(x) + \beta(x)\right)^{1/k}},
\end{equation}
where $k>0$ is an order parameter. It can be shown that for large enough $k$ under mild assumptions on the condition number of the Hessian of the objective functions and the geometry of the free space the above artificial potential is a navigation function. These conditions are given in the following Theorem \cite{PaternainEtal15}.
\begin{theorem}\label{theo_navigation_function}
Let $\mathcal{F}$ be the free space defined in \eqref{eqn_freespace} verifying Assumption \ref{as_obstacles}, and let $\varphi_k: \mathcal{F} \rightarrow [0,1]$ be the function defined in \eqref{eqn_navigation_function}. Let $\lambda_{\max}$, $\lambda_{\min}$ be the bounds from Assumption \ref{assum_objective_function} and $\mu^i_{\min}$ the minimum eigenvalue of the Hessian of $\beta_i(x)$. Furthermore let the following inequality hold for all $i=1..m$
\begin{equation}\label{eqn_condition_general}
\frac{\lambda_{\max}}{\lambda_{\min}} \frac{\nabla \beta_i(x_s)^T(x_s-x^*)}{\|x_s - x^*\|^2}< \mu^i_{\min},
\end{equation}
where $x_s\in \partial \ccalO_i$ . Then there exists a constant $K$ such that if $k>K$, $\varphi_k(x)$ is a navigation function with minimum at $x^*$ if $f_0(x^*) =0$ and with minimum arbitrarily close to $x^*$ if $f_0(x^*) \neq 0$. 
\end{theorem}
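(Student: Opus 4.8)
The plan is to verify, for all sufficiently large $k$, each of the four conditions of Definition \ref{def_navigation_function}, tracking precisely where inequality \eqref{eqn_condition_general} is used. Two of the conditions are almost immediate. For \textbf{differentiability}, note that the denominator $f_0^k(x)+\beta(x)$ is strictly positive on all of $\ccalF$: on $\partial\ccalF$ one has $\beta(x)=0$ but $f_0(x)>0$ since $x^*\in int(\ccalF)$ is the unique minimizer, while on $int(\ccalF)$ one has $\beta(x)>0$; hence the $1/k$-th power in \eqref{eqn_navigation_function} is $C^2$ and so is $\varphi_k$. For \textbf{admissibility}, on $\partial\ccalF$ the formula \eqref{eqn_navigation_function} collapses to $\varphi_k(x)=f_0(x)/\big(f_0^k(x)\big)^{1/k}=1$, whereas on $int(\ccalF)$ the strict inequality $f_0^k(x)+\beta(x)>f_0^k(x)$ together with $f_0\geq 0$ gives $0\leq\varphi_k(x)<1$; hence $\partial\ccalF=\varphi_k^{-1}(1)$.

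The core of the argument is the \textbf{polar} and \textbf{Morse} properties, i.e., that the only critical points of $\varphi_k$ are a non-degenerate minimum near $x^*$ and one non-degenerate saddle point near each obstacle. Computing $\nabla\varphi_k$ and clearing a strictly positive power of $f_0^k(x)+\beta(x)$, one finds that on $int(\ccalF)$ the equation $\nabla\varphi_k(x)=0$ is equivalent to
\begin{equation}\label{eqn_critical_points_sketch}
k\,\beta(x)\,\nabla f_0(x) = f_0(x)\,\nabla\beta(x).
\end{equation}
I would cover $\ccalF$ by a ball $\ccalV_0$ around $x^*$, a thin collar $\ccalV_i$ around each boundary $\partial\ccalO_i$, a collar around the workspace boundary $\partial\ccalX$, and the remaining ``valley'' $\ccalV_\star$. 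On $\ccalV_\star$ the factor $\beta$ is bounded below by some $\delta>0$, $f_0$ and $\|\nabla\beta\|$ are bounded above, and $\|\nabla f_0\|$ is bounded below (strong convexity, away from $x^*$); hence for $k$ above an explicit threshold the left side of \eqref{eqn_critical_points_sketch} strictly dominates the right side in norm and there are no critical points in $\ccalV_\star$. Near $\partial\ccalX$ the concavity of $\beta_0$, i.e., convexity of $\ccalX$, forces $\nabla f_0$ and $\nabla\beta$ into opposite half-spaces, so \eqref{eqn_critical_points_sketch}, which would require them to be positive multiples of one another, again has no solution; in particular $\partial\ccalF$ is a regular level set. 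On $\ccalV_0$, \eqref{eqn_critical_points_sketch} forces $\|\nabla f_0(x)\|=O(1/k)$, and strong convexity ($\|\nabla f_0(x)\|\geq\lambda_{\min}\|x-x^*\|$) confines the unique critical point to an $O(1/k)$ ball about $x^*$, with Hessian dominated by the positive definite term $k\,\beta(x^*)\nabla^2 f_0(x^*)$; this critical point equals $x^*$ when $f_0(x^*)=0$ and is the nearby minimum claimed otherwise.

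What remains --- and what I expect to be the crux --- is the analysis inside each collar $\ccalV_i$. Writing $\beta=\beta_i\prod_{j\neq i}\beta_j$ and dividing \eqref{eqn_critical_points_sketch} by $\prod_{j\neq i}\beta_j$, which is bounded away from zero on $\ccalV_i$, the equation becomes $k\,\beta_i\nabla f_0 = f_0\nabla\beta_i + O(\beta_i)$; taking norms yields $\beta_i=O(1/k)$, so any critical point in $\ccalV_i$ lies in an $O(1/k)$-collar of $\partial\ccalO_i$, and matching directions shows it sits near the point of $\partial\ccalO_i$ at which the outward normal $\nabla\beta_i$ is parallel to $\nabla f_0$. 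To finish one must show there is exactly one such critical point and that the Hessian of $\varphi_k$ there is non-degenerate. In normal/tangential coordinates along $\partial\ccalO_i$, the normal component of \eqref{eqn_critical_points_sketch} determines $\beta_i$ as a function of the tangential position, and substitution shows the tangential critical points converge, as $k\to\infty$, to those of a function of the tangential variable built from $f_0$ restricted to $\partial\ccalO_i$ and the curvature of $\partial\ccalO_i$. The Hessian of $\varphi_k$ then splits into a transverse block (one non-zero eigenvalue, from the balance of the attractive and repulsive potentials) and a tangential block which, to leading order in $1/k$, pits $\nabla^2 f_0$ (eigenvalues in $[\lambda_{\min},\lambda_{\max}]$) against the second fundamental form of $\partial\ccalO_i$ (controlled below by $\mu^i_{\min}$ through $\nabla^2\beta_i$). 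Inequality \eqref{eqn_condition_general} is exactly the requirement that this tangential block be definite --- equivalently, that the obstacle be curved enough and not too far from, or too unfavorably oriented toward, $x^*$ relative to the conditioning $\lambda_{\max}/\lambda_{\min}$ --- and it therefore delivers both the uniqueness and the non-degeneracy of the near-obstacle critical point. Taking $K$ to be the maximum of the finitely many thresholds collected above, for $k>K$ the critical set of $\varphi_k$ consists of the non-degenerate minimum near $x^*$ and the $m$ non-degenerate obstacle saddles; hence $\varphi_k$ is Morse and polar and thus a navigation function, with minimum at $x^*$ when $f_0(x^*)=0$ and within $O(1/k)$ of $x^*$ otherwise.
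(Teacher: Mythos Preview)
The paper does not prove this theorem; its proof consists of the single line ``See Theorem~2 in \cite{PaternainEtal15}.'' Your sketch is therefore an independent reconstruction of that cited result, and it follows the classical Rimon--Koditschek strategy correctly: verify differentiability and admissibility directly from \eqref{eqn_navigation_function}, reduce $\nabla\varphi_k=0$ to the scaled equation $k\beta\nabla f_0=f_0\nabla\beta$, and localize the critical points via a covering of $\ccalF$ by a ball about $x^*$, obstacle collars, a workspace-boundary collar, and a residual region.

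Two points in your outline deserve sharpening. First, near each obstacle the tangential Hessian block must be specifically \emph{negative} definite, not merely definite, so that the critical point is a saddle rather than a spurious local minimum; this sign is precisely what \eqref{eqn_condition_general} enforces, and polarity --- not only the Morse property --- hinges on it. Second, uniqueness of the near-obstacle critical point does not follow from definiteness of that block alone; it comes from the observation that the limiting tangential condition ($\nabla f_0$ a \emph{positive} multiple of $\nabla\beta_i$ on $\partial\ccalO_i$) picks out exactly the maximizer of $f_0$ over the obstacle boundary, which is unique by strict convexity of both $f_0$ and $\ccalO_i$. With these refinements your outline matches what the cited reference establishes.
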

\begin{proof}
See Theorem 2 in \cite{PaternainEtal15}.
\end{proof}
Theorem \ref{theo_navigation_function} provides a condition on the obstacles and the objective function for which $\varphi_k(x)$ is a navigation function for sufficiently large $k$. The condition has to be satisfied for all the points lying in the boundary of an obstacle. Notice however that the product $\nabla \beta_i(x_s)^T(x_s - x^*)$ is negative if $\nabla \beta_i(x_s)$ and $x_s-x^*$ point in opposite directions, meaning that the condition can be violated only by points in the boundary of the obstacle that are behind the obstacle as seen from the minimum point. In that case the worst scenario is when $\nabla \beta_i(x_s)$ is aligned with $x_s - x^*$. In this case it is of interest that the gradient $\nabla \beta_i(x_s)$ is not too large with respect to the minimum eigenvalue $\mu^i_{\min}$, i.e., the obstacle is not too flat. On the other hand we want the ratio $1/\|x_s-x^*\|$ to be small in order to satisfy \eqref{eqn_condition_general}. This ratio being small means that the destination $x^*$ is not too close to the boundary of the obstacle. Finally, condition \eqref{eqn_condition_general} is easier to satisfy when the ratio $\lambda_{\max}/\lambda_{\min}$ is close to one, meaning that the closer the level sets of the objective function are to spheres, the easier is to navigate the environment. In summary, the simplest navigation problems have obstacles and objective function whose level sets are close tho spheres and minima that are not close to the boundary of the obstacles.

While the navigation function approach provides a provable way of navigating towards the minimum of a convex potential in a cluttered workspace, its drawback is that it needs a complete characterization of the obstacles to build the function $\varphi_k(x)$ defined in \eqref{eqn_navigation_function}. Moreover, to ensure that the agent is moving in the direction of the negative gradient of the navigation function, the measurements of the objective function and the obstacles need to be exact. In this work we relax these assumptions by considering only local and stochastic information. Formally, let $\left( \Omega,\ccalG, P\right)$ be a probability space and define the following filtration defined as a sequence of increasing sigma algebras $\left\{\emptyset, \Omega\right\} = \ccalG_0 \subset \ccalG_1 \subset \ldots \subset \ccalG_t \subset \ldots \subset \ccalG$. For each $t \geq 0$, define a random vector $\theta_t $ to be $\ccalG_t$ measurable. Then at each time $t\in\mathbb{N}$ for a given position in the free space $x_t \in \ccalF$ the agent is able to compute a biased estimate of the gradient of the navigation function $\hat{g}_t(x_t,\theta_t)$ satisfying
\begin{equation}\label{eqn_estimate_form}
\E{\hat{g}_t(x_t,\theta_t)\Big| \ccalG_t} = \alpha(x)\left(\nabla \varphi_k(x)+b_k(x)\right),
\end{equation}
where $\alpha:\ccalF \to \mathbb{R}$ is a strictly positive differentiable function and $b_k:\ccalF \to \mathbb{R}^n$ is piece-wise differentiable. As it will be explored in Section \ref{sec_sensor_model} the bias $b_k(x)$ accounts for a mismatch between the real free space and the one that the robot is able to estimate given some belief about the environment. This mismatch is the consequence of using local information about the free space. Drawing inspiration from the deterministic scenario we propose a stochastic gradient descent scheme to solve \eqref{eqn_navigation_problem} using only local and stochastic information in which the agent updates its configuration recursively as
\begin{equation}\label{eqn_gradient_descent}
x_{t+1} = x_t - \varepsilon_t \hat{g}_t(x_t,\theta_t),
\end{equation}
where $\varepsilon_t$ is a step size assumed to be not summable and square summable. Typically one can select the step size as $\varepsilon_t = \varepsilon_0/ (1+\zeta t)$, where $\varepsilon_0$ is the initial step size and $\zeta$ controls the rate at which the step size is decreased. We formalize the assumption on he step size for future reference. 
%
\begin{assumption}\label{assumption_step_size}
The step size $\varepsilon_t$ for the update \eqref{eqn_gradient_descent} is a positive and strictly decreasing sequence that satisfies 
 \begin{equation}
 \sum_{t=0}^\infty \varepsilon_t =\infty, \quad \sum_{t=0}^\infty \varepsilon_t^2 < \infty.
 \end{equation}
\end{assumption}
%
The main contribution of this work is to show that an agent operating in a workspace with convex holes, that is given an estimate of the form \eqref{eqn_estimate_form} is able to reach the minimum of a unknown convex function without running into the free space boundary with probability one (Section \ref{sec_biased}). Before presenting this result, in Section \ref{sec_sensor_model} we consider a sensor model from which an estimate satisfying \eqref{eqn_estimate_form} arises and we present a preliminary result for unbiased estimates (Section \ref{sec_unbiased}).
%
\section{Sensor Model Examples}\label{sec_sensor_model}
In this section we propose an estimate of the gradient of a Rimon-Koditschek navigation function based on local and stochastic observations about the objective functions and the obstacles. The estimate proposed is based in the fact that the direction of the gradient of the potential defined in \eqref{eqn_navigation_function} is given by the following expression
\begin{equation}\label{eqn_navigation_direction}
\beta(x)\nabla f_0(x) - \frac{f_0(x) \nabla \beta(x)}{k}.
\end{equation}
The above fact can be conclude after differentiating the expression \eqref{eqn_navigation_function} and noticing that the terms that multiply \eqref{eqn_navigation_direction} are strictly positive. Since the objective function is typically a physical magnitude that must be minimized or maximized one can assume that the robot has estimates of the function $f_0(x)$ and its gradient at the current location. For instance in the problem of climbing a forested hill the function $f_0(x)$ represents the height profile of the hill. Using a GPS the agent is able to have a measure of the height at the current location and with an inertial measurement unit (IMU) it is possible to estimate the slope of the hill understood as the gradient of the height profile function $f_0(x)$. Denote these estimates at time $t$ by $\hat{f}_0(x_t,\theta_t)$ and $\hat{\nabla}f_0(x_t,\theta_t)$, where $\theta_t$ is a random vector measurable with respect to the sigma algebra $\ccalG_t$. In order to estimate the obstacles -- the trees in the hill climbing problem-- the agent may have information available gathered by a range finder. In this case depending on the belief that the agent has about the world there exists different forms of estimating the obstacles of which we discuss two examples next. Before doing that we define the set of obstacles that can be measured at a given position $x$. Due to physical limitations like the range of the sensor or the fact that obstacles can be ``hidden'' behind others the agent is not able to sense all the obstacles at a given position $x$. In that sense we define the set obstacles that can be estimated as those obstacles that are at a distance smaller than a given limit $c$
\begin{equation}\label{eqn_awareness_set}
\ccalA_c(x) = \left\{ i=1\ldots m \Big| d_i(x) \leq c\right\},
\end{equation}
where $d_i(x)$ is the distance to the $i$--th obstacle. 
\subsection{Circle Fitting}\label{sec_circles}
We consider the case where the belief that the robot has about the free space is that obstacles are spherical. Online estimation of distance, direction and curvature of the obstacles has been studied in the literature \cite{paper:de_wall_following_2013}. Denoting these quantities corresponding to the $i$--th obstacle by $d_i(x)$, $\bbn_i(x)$ and $R_i(x)$, the agent assumes the obstacle function to be
\begin{equation}
\tilde{\beta}_i(x)=d_i^2(x)+ 2R_i(x)d_i(x),
\end{equation}
and the assumed gradient of the function is of the form
\begin{equation}
\tilde{\nabla\beta}_i(x) = 2\left(d_i(x)+R_i(x)\right)\bbn_i(x).
\end{equation}
In particular observe that if the free space is indeed a spherical world the functions $\tilde{\beta}_i(x)$ and $\beta_i(x)$ are identical as well as $\tilde{\nabla\beta}_i(x)$ and $\nabla \beta_i(x)$. Denoting the estimates of the distance, direction and curvature of the $i$-th obstacle respectively by $\hat{d}_i(x_t,\theta_t)$, $\hat{\bbn}_i(x_t,\theta_t)$ and $\hat{R}_i(x_t,\theta_t)$, one can define an estimate of the function corresponding to the obstacle $\ccalO_i$ as 
\begin{equation}
\hat{\beta}(x_t,\theta_t) = \hat{d}_i^2(x_t,\theta_t) +2\hat{R}_i(x_t,\theta_t)\hat{d}_i(x_t,\theta_t),
\end{equation} 
and its gradient by
\begin{equation}
\hat{\nabla \beta}_i(x_t,\theta_t) =2 \left(\hat{d}_i(x_t,\theta_t) +\hat{R}_i(x_t,\theta_t)\right)\hat{\bbn}_i(x_t,\theta_t).
\end{equation}
With this information available a natural possibility inspired in \eqref{eqn_navigation_direction} is to define the estimate of the direction of the gradient of the navigation function as 
\begin{equation}\label{eqn_estimate_navigation_direction}
\begin{split}
\hat{g}_t(x_t,\theta_t) &:= \hat{\nabla}f_0(x_t,\theta_t)\prod_{i\ccalA_c(x_t)}\hat{\beta}_i(x_t,\theta_t)\\
&- \frac{\hat{f}_0(x_t,\theta_t)}{k}\sum_{i\in\ccalA_c(x_t)} \hat{\nabla\beta}_i(x_t,\theta_t)\prod_{j\in\ccalA_c(x_t), j\neq i} \hat{\beta}_j(x_t,\theta).
\end{split}
\end{equation}
By taking the expectation of the estimate with respect to the sigma algebra $\ccalG_t$ and assuming independence across estimates it is possible to show that the estimate \eqref{eqn_estimate_navigation_direction} satisfies \eqref{eqn_estimate_form}. Observe that if the estimates corresponding to the objective function and the obstacles are bounded -- which is the case in practical applications-- the estimate of the direction of the gradient has bounded norm. Further notice, that when an agent is close to the obstacle $\ccalO_i$  we have that $\beta_i(x_t) \approx 0$. Therefore, the direction $\hat{g}_t(x_t,\theta_t)$ is approximately given by
\begin{equation}
\hat{g}_t(x_t,\theta_t) \approx -\frac{\hat{f}_0(x_t,\theta_t) }{k} \prod_{j\in\ccalA_c(x_t), j\neq i} \hat{\beta}_j(x_t,\theta) \hat{\nabla \beta}_i(x_t,\theta_t).
\end{equation}
The above means that the update direction proposed in \eqref{eqn_gradient_descent} points outwards the $i$-th obstacle when the agent is close to it. These observations made for this particular estimator are presented as Assumption \ref{assumption_estimator_model} in Section \ref{sec_general_assumptions} for the general case. We next devote our attention to the properties of the bias $b_k(x)$. Let $\sigma_{d_i}^2(x)$ be the variance of the estimate of the distance to obstacle $\ccalO_i$. For the estimate defined in \eqref{eqn_estimate_navigation_direction} the bias $b_k(x)$ takes the particular form of 
\begin{equation}
\begin{split}
b_k(x) &= \frac{f_0(x)\beta(x)}{k\left(f_0(x)^k+\beta(x)\right)^{1+1/k}}\times\\
&\left(\sum_{i=0}^m\frac{\nabla \beta_i(x)}{\beta_i(x)}-\sum_{i\in\ccalA_c(x)}\frac{\tilde{\nabla \beta}_i(x)}{\tilde{\beta}_i(x)+\sigma_{d_i}^2(x)}\right).
\end{split}
\end{equation}
Observe that the bias depends upon three main factors, the limitation in the number of obstacles that can be measured, the difference between the free space and the belief of the agent and the variance of the estimation of the distance to the obstacles. In the particular case where the wolrd is spherical, the agent is able to sense all the obstacles and the distance to the obstacle is know exactly -- or an unbiased estimate of the distance squared is available-- the estimator is unbiased. In the general case it is possible to show that as long as the variance $\sigma_{d_i}^2(x)$ vanishes fast enough when $x$ approaches the boundary of $\ccalO_i$ we have that 
\begin{equation}\label{eqn_bounded_difference}
\left\|\sum_{i=0}^m\frac{\nabla \beta_i(x)}{\beta_i(x)}-\sum_{i\in\ccalA_c(x)}\frac{\tilde{\nabla \beta}_i(x)}{\tilde{\beta}_i(x)+\sigma_{d_i}^2(x)}\right\| \leq B',
\end{equation}
for all $x\in\ccalF$, where $B'$ is a nonnegative constant. The fact that the variance of the estimate of the distance vanishes translates in the fact that the closest the agent is to an obstacle the better it can be estimated. In particular, the estimation in the boundary is exact. Since the gradient of $\varphi_k(x)$ has a factor of $1/\left(f_0(x)^k+\beta(x)\right)^{1+1/k}$ it is more convenient to work with the following scaling of the bias 
\begin{equation}
\tilde{b}_k(x) = \left(f_0(x)^k+\beta(x)\right)^{1+1/k}b_k(x).
\end{equation}
Some consequences of the bias vanishing in the boundary of the free space are that for any $x\in \partial \ccalF$ we have $\tilde{b}_k(x) =b_k(x)= 0$ since $\beta(x) =0$. Further observe that the norm of $\tilde{b}_k(x)$ is decreasing at the rate $1/k$ for any point in the interior of the free space and in particular $\lim_{k\to \infty} \tilde{b}_k(x)=0$. Moreover, under this model the function $\tilde{b}_k(x)$ is piece-wise twice differentiable and the discontinuities are due to changes in the set $\ccalA_c(x)$, this is either when a new obstacle is sensed or when an obstacle cannot be sensed anymore. Therefore, the discontinuities occur away from the obstacles.
Further observe that since $\tilde{b}_k(x)$ is decreasing with $k$ and because $\lim_{k\to\infty}\|\tilde{b}_k(x)\|=0$ we have that the region where $\nabla \varphi_k(x)^T\left(\nabla \varphi_k(x)+b_k(x)\right)\leq 0$ are disjoint regions around the critical points of $\varphi_k(x)$ for large enough $k$. Let $x_c$ be a saddle point of $\varphi_k(x)$ and define the direction $v = \nabla \beta(x_c) / \|\nabla \beta(x_c)\|$ and $v_\perp$ a unit vector satisfying $v^T v_\perp=0$. One can show that if the obstacles are spherical the quotient of the quadratic form of the Jacobian of $b_k(x)$ at $x_c$ over the quadratic form of the Hessian of $\varphi_k(x)$ at $x_c$ is such that
\begin{equation}\label{eqn_quotient_jacobians1}
\left\|\frac{v^T Jb_k(x_c)v}{v^T\nabla^2\varphi_k(x_c)v}\right\| = O(1/k),
\end{equation}
and
\begin{equation}\label{eqn_quotient_jacobians2}
\left\|\frac{v_\perp^T Jb_k(x_c)v_{\perp}}{(v_\perp)^T\nabla^2\varphi_k(x_c)v_\perp}\right\| = O(1/k),
\end{equation}
where $O(1/k)$ is a function whose limit $\lim_{k\to \infty} O(1/k)k$ is a positive constant. It is also worth noticing that the saddle points $x_c$ of $\nabla \varphi_k(x)$ satisfy that $\beta(x_c)\leq L/k$ where $L$ is a non-negative constant (see Lemma 3 of \cite{PaternainEtal15}) the scaled bias satisfies $\tilde{b}_k(x_c) =O(1/k^2)$. The interpretation of the previous fact is that at the critical points of $\varphi_k(x)$, the $C^1$ norm\footnote{Given a vector field $f(x)$ we denote its $n$-derivative by $D^{(n)}f(x)$. We define the $C^n$ norm of a vector field $f(x)$ in a manifold $M$ as $\|f(x)\|_{C^n}=\sup_{x\in M}\left\{\|f(x)\|,\|Df(x)\|,\ldots, \|D^{(n)}f(x)\| \right\}$.} of the bias is small compared to that of the vector field $\nabla \varphi_k(x)$. In particular, for large enough $k$ in a neighborhood around a saddle point of $\varphi_k(x)$ the eigenvalues of the Jacobian of $\nabla \varphi_k(x) +b_k(x)$ have the same sign as those of the Hessian of $\varphi_k(x)$, therefore having the same stability properties. These observations about the bias for the particular estimate here presented are summarized under Assumption \ref{assumption_bias} for the generic case (c.f. Section \ref{sec_general_assumptions}). 
\subsection{Ellipse Fitting}\label{sec_ellipse}
A different approach for obstacle estimation is to fit ellipses around the obstacles instead of circles. In this case the functions defining the obstacles take the form 
\begin{equation}
\tilde{\beta}_i(x) = \left(x-x_i\right)^TA_i(x-x_i) - r_i^2,
\end{equation}
where $A_i$ is a symmetric $n\times n$ matrix. Thus, in order to fit ellipses around the obstacles one needs to estimate $(n-1)^2/2+n$ parameters corresponding to the matrix $A_i$, $n$ parameters corresponding to the center of the ellipses $x_i$ and one parameter corresponding to the scaling $r_i$. This is a drawback compared to the case of the circle where only its radius was needed, yet it reduces the mismatch between the model and the true environment for a larger class of obstacles. Under this model and assuming that unbiased estimates of the discussed quantities are available one can estimate the obstacle function as
\begin{equation}
\begin{split}
&\hat{\beta}_i(x_t,\theta_t) = -\hat{r}_i^2(x_t,\theta_t)+ \\
& \left(\hat{x_t}(x_t,\theta_t)-\hat{x}_i(x_t,\theta_t)\right)^T\hat{A}_i(x_t,\theta_t)\left(\hat{x_t}(x_t,\theta_t)-\hat{x}_i(x_t,\theta_t)\right),
\end{split}
\end{equation}
and its gradient as 
\begin{equation}
\hat{\nabla \beta}_i(x_t,\theta_t) = 2\hat{A}_i(x_t,\theta_t)\left(\hat{x_t}(x_t,\theta_t)-\hat{x}_i(x_t,\theta_t)\right).
\end{equation}
As discussed in the previous section \eqref{eqn_quotient_jacobians1} and \eqref{eqn_quotient_jacobians2} hold when the obstacles are spherical, likewise when considering ellipses as hallucinated obstacles \eqref{eqn_quotient_jacobians1} and \eqref{eqn_quotient_jacobians2} holds for obstacles that do not differ much from ellipsoids. 

\subsection{General Model Assumptions}\label{sec_general_assumptions}
We summarize the observations about the estimate of the gradient of the navigation function $\hat{g}_t(x_t,\theta_t)$ for the particular models described in Sections \ref{sec_circles} and \ref{sec_ellipse} under the following assumptions for a generic estimate satisfying \eqref{eqn_estimate_form}. 
%
\begin{assumption}\label{assumption_estimator_model}
The estimate of the gradient of the navigation function $\hat{g}(x_t,\theta_t)$ is 
\begin{mylist}
\item[\bf Bounded] There exists a strictly positive constant $B$ such that for all $x\in\ccalF$ and for all $\theta$ we have that 
\begin{equation}
\| \hat{g}(x,\theta)\| \leq B.
\end{equation}
\item[\bf Points outwards the obstacles]
For each obstacle $\ccalO_i$ there exists a constant $\gamma_i>0$ such that if $d_i(x) < \gamma_i$ we have for all $\theta$   
\begin{equation}
-\hat{g}(x,\theta)^T \nabla\beta_i(x)>0,
\end{equation}   
where $d_i(x)$ denotes the distance to the obstacle $\ccalO_i$.
\item[\bf Biased]
Let $\alpha(x):\ccalF\to \mathbb{R}_{++}$ be a differentiable function bounded away from zero and let $b_k(x):\ccalF\to\mathbb{R}^n$ be piece-wise differentiable on the free space and let $\varphi_k(x)$ be the function defined in \eqref{eqn_navigation_function}. Then the expected value of the estimate $\hat{g}_t(x_t,\theta_t)$ with respect to the sigma algebra $\ccalG_t$ satisfies 
\begin{equation}
\E{\hat{g}_t(x_t,\theta_t)\Big| \ccalG_t} = \alpha(x_t)\left(\nabla \varphi_k(x_t) + b_k(x_t)\right). 
\end{equation}
\end{mylist}
\end{assumption}
%
\begin{assumption}\label{assumption_bias}
The bias $b_k(x)$ defined in \eqref{eqn_estimate_form} is piece-wise differentiable on the free space and has the following properties. 
\begin{mylist}
\item[\bf Unbiased at the boundary] The bias $b_k(x)$ is such that for any $x\in \partial \ccalF$ we have that $b_k(x)=0$ for all $k$. 
\item[\bf Dependence with $k$] The scaled bias 
\begin{equation}
\tilde{b}_k(x) = b_k(x)\left(f_0(x)^k+\beta(x))^{1+1/k}\right)
\end{equation}
 is such that for any point $x$ in the interior of the free space $\ccalF$ we have that 
\begin{equation}
\|b_k(x)\| =O(1/k),
\end{equation} 
where $O(1/k)$ is a function satisfying $\lim_{k\to\infty}O(1/k)k = M$ with $M$ a positive constant.
\item[\bf Discontinuities away of the boundary] There exists a constant $D>0$ such that the function $b_k(x)$ is differentiable for all $x\in\ccalF$ satisfying $\beta_i(x)<D$ for every $i=1\ldots m$.
%
%
\item[\bf Regularity Assumption]
Let $\ccalU_k^{i}$ be the set defined as 
\begin{equation}
\begin{split}
\ccalU_k^{i}=\left\{x\in\ccalF\big| \nabla \varphi_k(x)^T\left(\nabla \varphi_k(x)+b_k(x)\right)\leq0 \right\} \\
\cap \left\{x\in\ccalF\big| \beta_i(x)\leq D \right\}.
\end{split}
\end{equation}
Since $\varphi_k(x)$ is a Morse function the vector field $\nabla \varphi_k(x)$ is strucutraly stable (c.f. Theorem 1.4 p.127 \cite{palis1970structural}). This is, there exists $\varepsilon_k>0$ such that for any function $g(x)$ satisfying $\|g(x)\|_{C^1}<\varepsilon_k$ we have that the orbits of $\dot{x} = \varphi_k(x)+g(x)$ are conjugate to those of $\dot{x}=\varphi_k(x)$. We assume the bias $b_k(x)$ be such that $\|b_k(x)\|<\varepsilon_k$ for any $x\in\ccalU_k^i$.
%
\end{mylist}
\end{assumption}
As discussed in Sections \ref{sec_circles} and \ref{sec_ellipse} the bias $b_k(x)$ accounts for a mismatch between the free space and the free space that the agent is able to estimate. This mismatch does not introduce a problem as long as the Regularity Assumption holds as we show in Section \ref{sec_biased}, where we show that despite this mismatch the agent is able to converge to a point that is arbitrarily close to the minimum of the objective function. However the Regularity Assumption limits the mismatch between the true environment and the model that the agent may have of it. In that sense, it is not clear to us whether this assumption is a limitation on the type of hallucinated obstacles that can be used to fit a given world or if it is a limitation on the analysis in Section \ref{sec_biased}. In the next section we present a preliminary result for unbiased estimates.
\section{Unbiased Estimator}\label{sec_unbiased}
In this section we consider the particular case of an agent that has access to an unbiased estimator of the gradient of the navigation function rather than the general model presented in \eqref{eqn_estimate_form}. This means that the bias is identically zero $b_k(x)\equiv 0$. The main result of this section is that an agent that follows the gradient update \eqref{eqn_gradient_descent} converges to the minimum of the navigation function $\varphi_k(x)$ defined in \eqref{eqn_navigation_function} while avoiding the obstacles with probability one. Therefore solving problem \eqref{eqn_navigation_problem}. We start by showing that the update proposed ensures obstacle avoidance. In the continuous time and deterministic framework this is a trivial consequence of the fact that the navigation function is admissible. Due to both the discretization  and the stochasticity this not longer the case unless the step size is small enough. The following lemma formalizes this result.
\begin{lemma}\label{lemma_non_collision}
Let $\mathcal{F}$ be the free space defined in \eqref{def_freespace} verifying Assumption \ref{as_obstacles}. Furthermore, let $\hat{g}_t(x_t,\theta_t)$ be an estimate of the gradient of the navigation function \eqref{eqn_navigation_function} satisfying Assumption \ref{assumption_estimator_model}. Then, by choosing a step size satisfying Assumption \ref{assumption_step_size} with $\varepsilon_0<\min_i\gamma_i/B$, where $\gamma_i$ and $B$ are defined in Assumption \ref{assumption_estimator_model}, the update \eqref{eqn_gradient_descent} is such that the sequence $\left\{x_t,t\geq0 \right\}\in \ccalF$.
\end{lemma}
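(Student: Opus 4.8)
The plan is to prove the claim by induction on $t$, exploiting two facts: a single step of \eqref{eqn_gradient_descent} is shorter than the safety margin $\gamma_i$ of every obstacle, and inside that margin the estimate pushes the iterate away from the obstacle while $\beta_i$ is convex. The base case is the starting hypothesis $x_0\in\ccalF$. For the inductive step, suppose $x_t\in\ccalF$; writing $\beta_0$ for the concave function that defines $\ccalX$ and $\beta_1,\dots,\beta_m$ for the convex functions that define the obstacles, membership $x_{t+1}\in\ccalF$ is equivalent to $\beta_i(x_{t+1})\geq 0$ for every $i=0,1,\dots,m$, so it suffices to establish these $m+1$ inequalities.

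Since $\|\hat{g}_t(x_t,\theta_t)\|\leq B$ by Assumption \ref{assumption_estimator_model} and $\varepsilon_t$ is decreasing with $\varepsilon_0<\min_i\gamma_i/B$, the step length obeys $\|x_{t+1}-x_t\|=\varepsilon_t\|\hat{g}_t(x_t,\theta_t)\|\leq\varepsilon_0 B<\min_i\gamma_i$. Fix an obstacle index $i\in\{1,\dots,m\}$ and distinguish two cases. If $d_i(x_t)\geq\gamma_i$, then $d_i(x_{t+1})\geq d_i(x_t)-\|x_{t+1}-x_t\|>\gamma_i-\varepsilon_0 B>0$, so $x_{t+1}$ lies strictly outside $\ccalO_i$. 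If instead $d_i(x_t)<\gamma_i$, the ``points outwards'' part of Assumption \ref{assumption_estimator_model} gives $-\hat{g}_t(x_t,\theta_t)^T\nabla\beta_i(x_t)>0$, hence $\nabla\beta_i(x_t)^T(x_{t+1}-x_t)=-\varepsilon_t\,\hat{g}_t(x_t,\theta_t)^T\nabla\beta_i(x_t)>0$; since $\beta_i$ is convex its first-order expansion is a global underestimator, so
\begin{equation*}
\beta_i(x_{t+1}) \geq \beta_i(x_t)+\nabla\beta_i(x_t)^T(x_{t+1}-x_t) > \beta_i(x_t) \geq 0,
\end{equation*}
using $x_t\in\ccalF$ in the last step, so again $x_{t+1}\notin\ccalO_i$. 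Thus $x_{t+1}\notin\bigcup_{i=1}^m\ccalO_i$; the remaining inequality $\beta_0(x_{t+1})\geq 0$, i.e. $x_{t+1}\in\ccalX$, follows by the same two-case argument applied to the convex function $-\beta_0$, the role of ``pointing outwards the obstacle'' being played by ``pointing into $\ccalX$'' near $\partial\ccalX$. This closes the induction, so $x_t\in\ccalF$ for all $t\geq 0$.

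The only nontrivial point is the case $d_i(x_t)<\gamma_i$: Assumption \ref{assumption_estimator_model} only guarantees, to first order, that moving along $-\hat{g}_t$ increases $\beta_i$, whereas we need a statement about the finite displacement $x_t\mapsto x_{t+1}$. Convexity of $\beta_i$ is exactly what bridges this gap, promoting the first-order increase to a genuine lower bound on $\beta_i(x_{t+1})$. It is worth noting that this lemma uses only $\varepsilon_t\leq\varepsilon_0$ together with $\varepsilon_0<\min_i\gamma_i/B$; the non-summability and square-summability in Assumption \ref{assumption_step_size} are not needed here and enter only in the convergence analysis of the later sections.
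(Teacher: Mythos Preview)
Your argument follows the same two-case split as the paper's proof, and your Case~2 is in fact more careful: where the paper merely asserts that $-\hat g_t(x_t,\theta_t)^T\nabla\beta_i(x_t)>0$ makes non-collision with $\ccalO_i$ ``trivial,'' you invoke convexity of $\beta_i$ to promote the first-order increase to the genuine lower bound $\beta_i(x_{t+1})>\beta_i(x_t)\geq 0$. That is a real improvement in rigor.

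There is, however, a slip in your treatment of the outer boundary $\partial\ccalX$. You claim the same argument applies to the convex function $-\beta_0$, but convexity of $-\beta_0$ gives
\[
-\beta_0(x_{t+1})\;\geq\;-\beta_0(x_t)-\nabla\beta_0(x_t)^T(x_{t+1}-x_t),
\]
which is a \emph{lower} bound on $-\beta_0(x_{t+1})$, hence an \emph{upper} bound on $\beta_0(x_{t+1})$; this is the wrong direction to conclude $\beta_0(x_{t+1})\geq 0$. Staying \emph{inside} a convex set is not the same constraint as staying \emph{outside} one, and the first-order convexity inequality does not transfer by simply flipping the sign of the barrier. To be fair, the paper's own proof does not address the outer boundary at all---it treats only the obstacles $\ccalO_i$---and Assumption~\ref{assumption_estimator_model} as stated provides a ``points outwards'' clause only for $i=1,\ldots,m$, with no $\gamma_0$. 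Closing this gap would require either an explicit inward-pointing hypothesis for $\beta_0$ together with a separate (curvature- or step-size-based) argument, or an assumption that the iterates remain far enough from $\partial\ccalX$ that only Case~1 ever applies there.
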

\begin{proof}
Denote by $d_i(x)$ the euclidean distance of the point $x$ to the set $\ccalO_i$ and observe that by virtue of the triangular inequality one has that
\begin{equation}\label{eqn_non_collision_lemma}
d_i(x_{t+1}) \geq d_i(x_{t})-\varepsilon_t \|\hat{g}_t(x_t,\theta_t)\|. 
\end{equation}
Because the estimate of the gradient of the navigation function satisfies that $\|\hat{g}_t(x_t,\theta_t) \| \leq B$ (c.f. Assumption \ref{assumption_estimator_model}) and $\varepsilon_t$ is a decreasing sequence (c.f. Assumption \ref{assumption_step_size}), if $\varepsilon_0 \leq \min_i\gamma_i /B$ we have that $\varepsilon_t \|\hat{g}_t(x_t,\theta_t)\| < \min_i\left\{\gamma_i \right\}$. Therefore, for cases in which $d_i(x_t)\geq \gamma_i$ \eqref{eqn_non_collision_lemma} can be lower bounded by
\begin{equation}
d_i(x_{t+1}) > \gamma_i- \min_i\gamma_i \geq 0.
\end{equation}
The above implies that if at time $t$, the iterate $x_t$ is at a distance larger than $\gamma_i$ of the obstacle $\ccalO_i$ then at time $t+1$ the iterate $x_{t+1}$ remains in the free space. We are left to show that this is also true for cases where $d_i(x_t) <\gamma_i$. By Assumption \ref{assumption_estimator_model}, in this case we have that $-\hat{g}_t(x_t,\theta_t)^T \nabla \beta_i(x_t) >0$ and therefore non collision with obstacle $\ccalO_i$ is ensured trivially.
\end{proof}
%
The previous lemma shows that for a small enough initial step size the update \eqref{eqn_gradient_descent} is such that it avoids collisions. Observe that the previous result holds independently of the fact that the estimate is unbiased, so non collision is ensured both in the biased and unbiased cases. We next show that when the estimate is unbiased the gradient descent update \eqref{eqn_gradient_descent} converges almost surely to the set of critical points of the navigation function \eqref{eqn_navigation_function}. 
%
\begin{lemma}\label{lemma_convergence_to_critical_points}
Let $\mathcal{F}$ be the free space defined in \eqref{def_freespace} verifying Assumption \ref{as_obstacles} and let \eqref{eqn_condition_general} hold. Denote by $\hat{g}_t(x_t,\theta_t)$ an unbiased estimate of the gradient of the artificial potential \eqref{eqn_navigation_function} satisfying Assumption \ref{assumption_estimator_model} with $b(x)\equiv 0$. Furthermore, let $\varepsilon_t$ be a sequence satisfying Assumption \ref{assumption_step_size} with $\varepsilon_0<\min_i\gamma_i/ B$, where $\gamma_i$ and $B$ are defined in Assumption \ref{assumption_bias}. Then, there exists $K>0$ such that for any $x_0\in \ccalF$ and for any $k>K$ the sequence generated by the update \eqref{eqn_gradient_descent} is such that
\begin{equation}
\lim_{t\to \infty} x_t = X_c \quad \mbox{a.e.},
\end{equation}
where $X_c$ is a random variable taking values on the set of the critical points of $\varphi_k(x)$. 
\end{lemma}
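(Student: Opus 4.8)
The plan is to use the navigation function $\varphi_k$ as a stochastic Lyapunov function, run a Robbins--Siegmund supermartingale argument, and then invoke its Morse structure to pin down the limit. By Lemma~\ref{lemma_non_collision}, taking $\varepsilon_0<\min_i\gamma_i/B$ keeps the whole trajectory $\{x_t\}$ inside the compact set $\ccalF$; for $k>K$ Theorem~\ref{theo_navigation_function} makes $\varphi_k$ a navigation function, hence twice continuously differentiable, and since the increments $\|x_{t+1}-x_t\|=\varepsilon_t\|\hat{g}_t(x_t,\theta_t)\|\le B\varepsilon_t$ vanish, the segments $[x_t,x_{t+1}]$ eventually lie in a neighborhood of $\ccalF$ on which $\varphi_k$ is $C^2$. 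Compactness then provides a uniform Lipschitz constant $L$ for $\nabla\varphi_k$ along the trajectory, the bound $\varphi_k\ge 0$, and $\alpha(x)\ge\alpha_{\min}>0$ from Assumption~\ref{assumption_estimator_model}. A descent inequality (second-order Taylor bound) applied to \eqref{eqn_gradient_descent} gives
\begin{equation}
\varphi_k(x_{t+1}) \le \varphi_k(x_t) - \varepsilon_t\,\nabla\varphi_k(x_t)^T\hat{g}_t(x_t,\theta_t) + \frac{L}{2}\varepsilon_t^2\|\hat{g}_t(x_t,\theta_t)\|^2 .
\end{equation}
Taking $\E{\,\cdot\,\Big|\,\ccalG_t}$, using $b_k(x)\equiv 0$ so that $\E{\hat{g}_t(x_t,\theta_t)\Big|\ccalG_t}=\alpha(x_t)\nabla\varphi_k(x_t)$, together with $\|\hat{g}_t(x_t,\theta_t)\|\le B$, yields
\begin{equation}
\E{\varphi_k(x_{t+1})\,\Big|\,\ccalG_t} \le \varphi_k(x_t) - \varepsilon_t\,\alpha_{\min}\|\nabla\varphi_k(x_t)\|^2 + \frac{LB^2}{2}\varepsilon_t^2 .
\end{equation}

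Since $\sum_t\varepsilon_t^2<\infty$ (Assumption~\ref{assumption_step_size}), the Robbins--Siegmund lemma applies: almost surely $\varphi_k(x_t)$ converges to a finite limit and $\sum_{t\ge 0}\varepsilon_t\|\nabla\varphi_k(x_t)\|^2<\infty$. Because $\sum_t\varepsilon_t=\infty$, this forces $\liminf_{t\to\infty}\|\nabla\varphi_k(x_t)\|=0$ a.s. To upgrade this to an actual limit I would use the slow variation of the iterates: from $\|x_{t+1}-x_t\|\le B\varepsilon_t\to 0$ and the Lipschitz bound, $\bigl|\,\|\nabla\varphi_k(x_{t+1})\|-\|\nabla\varphi_k(x_t)\|\,\bigr|\le LB\varepsilon_t$. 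If $\limsup_t\|\nabla\varphi_k(x_t)\|=\delta>0$, then $\|\nabla\varphi_k(x_t)\|$ must cross the band $[\delta/3,2\delta/3]$ infinitely often, and along each crossing block the accumulated step size is at least $\delta/(3LB)$ while $\|\nabla\varphi_k(x_t)\|^2\ge\delta^2/9$, so $\sum_t\varepsilon_t\|\nabla\varphi_k(x_t)\|^2=\infty$, a contradiction. Hence $\lim_{t\to\infty}\|\nabla\varphi_k(x_t)\|=0$ a.s.

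It remains to show $x_t$ converges to a single critical point. For $k>K$, $\varphi_k$ is Morse (c.f. Definition~\ref{def_navigation_function} and Theorem~\ref{theo_navigation_function}), so the critical set $C=\{x\in\ccalF:\nabla\varphi_k(x)=0\}$ is finite (isolated critical points on a compact set), its points lying in the interior of $\ccalF$ since $\partial\ccalF=\varphi_k^{-1}(1)$ is a regular level set. Since $\ccalF$ is compact and $\|\nabla\varphi_k(x_t)\|\to 0$, every accumulation point of $\{x_t\}$ belongs to $C$; and since $\|x_{t+1}-x_t\|\to 0$, the set of accumulation points is connected, hence a connected subset of the finite set $C$, i.e., a single point. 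Denoting it $X_c$, we conclude $x_t\to X_c$ a.s., with $X_c$ a random variable taking values in the set of critical points of $\varphi_k$, as claimed. The step I expect to be the real work --- and the one requiring care --- is the passage from $\liminf\|\nabla\varphi_k(x_t)\|=0$ to $\lim\|\nabla\varphi_k(x_t)\|=0$, together with justifying the descent inequality and the supermartingale step uniformly along the trajectory; the remainder follows from compactness, the $C^2$ regularity of $\varphi_k$, and the standing hypotheses on $\hat{g}_t$ and $\varepsilon_t$.
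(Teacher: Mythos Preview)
Your proof is correct and follows the same supermartingale route as the paper: Taylor expansion of $\varphi_k$ along the step, conditional expectation using $\E{\hat g_t\mid\ccalG_t}=\alpha(x_t)\nabla\varphi_k(x_t)$, and a Robbins--Siegmund argument (the paper writes out the explicit supermartingale $S_t=\varphi_k(x_t)+\tfrac{LB^2}{2}\sum_{s\ge t}\varepsilon_s^2$ rather than citing Robbins--Siegmund, but the content is identical). Both proofs arrive at $\sum_t\varepsilon_t\alpha(x_t)\|\nabla\varphi_k(x_t)\|^2<\infty$ and hence $\liminf_t\|\nabla\varphi_k(x_t)\|=0$.

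Where you diverge is in the endgame. The paper stops at the $\liminf$, extracts a subsequence landing on the critical set, and then appeals to the already-established convergence of $\varphi_k(x_t)$ together with hyperbolicity of the critical points to conclude that the full sequence converges to a single critical point. You instead upgrade $\liminf$ to $\lim$ via the standard crossing/summability contradiction, and then use $\|x_{t+1}-x_t\|\to 0$ to get connectedness of the accumulation set, which must therefore be a single point of the finite Morse critical set. Your route is a bit longer but more self-contained: it does not implicitly rely on distinct critical values, whereas the paper's last step (from ``$\varphi_k(x_t)$ converges and a subsequence hits a critical point'' to ``the whole sequence converges to that point'') leans on the Morse structure in a way that is stated rather tersely. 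Both arguments share the same minor technicality---that the Taylor remainder point $z$ on the segment $[x_t,x_{t+1}]$ stays in the domain of $\varphi_k$---which the paper simply asserts and you address by noting the increments vanish; in either case the ``points outward near the boundary'' part of Assumption~\ref{assumption_estimator_model} is what ultimately justifies it.
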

\begin{proof}
By virtue of Theorem \ref{theo_navigation_function} there exists $K>0$ such that for any $k>0$ the function $\varphi_k(x)$ defined in \eqref{eqn_navigation_function} is a navigation function. Let us write $\varphi_k(x_{t+1})$ in terms of the previous iterate using the update rule given in \eqref{eqn_gradient_descent} and the Taylor expansion of $\varphi_k(x)$ around the point $x_t$
\begin{equation}\label{eqn_taylor_expansion}
\begin{split}
&\varphi_k(x_{t+1})= \varphi_k\left(x_t - \varepsilon_t \hat{g}_t(x_t,\theta_t)\right) =\\
& \varphi_k(x_t) - \varepsilon_t\nabla \varphi_k(x_t) \hat{g}_t(x_t,\theta_t) 
+\frac{\varepsilon_t^2}{2} \hat{g}_t(x_t)^T \nabla^2 \varphi_k (z)\hat{g}_t(x_t),
\end{split}
\end{equation}
where $z$ is a point in the segment $ x_t - \mu\varepsilon_t\hat{g}_t(x_t)$ with $\mu\in [0,1]$. Since the sequence of iterates is contained in the free space $\ccalF$ (c.f. Lemma \ref{lemma_non_collision}), so is $z$. The free space being  a compact set and $\varphi_k(x)$ being a twice differentiable function (c.f. Definition \ref{def_navigation_function}), the maximum eigenvalue of the Hessian of $\varphi_k(x)$ is upper bounded by a constant. Let $L$ be an upper bound for this eigenvalue. Then the quadratic term in \eqref{eqn_taylor_expansion} can be bounded as
\begin{equation}\label{eqn_trivial_bound}
 \hat{g}_t^T (x_t,\theta_t)\nabla^2 \varphi_k (z)\hat{g}_t(x_t,\theta_t) \leq  L \|\hat{g}_t(x_t)\|^2.
 \end{equation} 
Consider the expectation with respect to the sigma field $\ccalG_t$ on both sides of \eqref{eqn_taylor_expansion}. Using the linearity of the expectation, the fact that $\varphi_k(x_t)$ is $\ccalG_t$ measurable and the bound derived in \eqref{eqn_trivial_bound} we have that
\begin{equation}
\begin{split}
\E{\varphi_k(x_{t+1})\Big| \ccalG_t} &\leq \varphi_k(x_t) - \varepsilon_t \E{\nabla \varphi_k(x_t)^T \hat{g}_t(x_t,\theta_t)\Big| \ccalG_t} \\
&+\varepsilon_t^2\frac{L}{2}\E{ \|{g}_t(x_t,\theta_t) \|^2\Big| \ccalG_t}.
\end{split}
\end{equation}
Which by Assumption \ref{assumption_estimator_model} can be further upper bounded by
\begin{equation}\label{eqn_supermartingale1}
\begin{split}
\E{\varphi_k(x_{t+1})\Big| \ccalG_t} &\leq \varphi_k(x_t) - \varepsilon_t  \E{\nabla \varphi_k(x_t)^T\hat{g}_t(x_t,\theta_t)\Big| \ccalG_t} \\
&+\varepsilon_t^2\frac{LB^2}{2}.
\end{split}
\end{equation}
We next show that the following subsequence is a nonnegative supermartingale
\begin{equation}\label{eqn_supermartingale}
S_t= \varphi_k(x_{t}) +\sum_{s=t}^\infty \varepsilon_s^2 \frac{LB^2}{2}
\end{equation}
Since $\varphi_k(x)$ is a navigation function it is nonnegative and therefore $S_t$ is nonnegative sequence. Furthermore it is admissible and its value in the boundary is one, thus bounded. This fact in addition with the assumption that the selected step size $\varepsilon_t$ is a square summable sequence (c.f. Assumption \ref{assumption_step_size}) implies that $S_t$ is an integrable random variable. $S_t$ is also adapted to $\ccalG_t$ since $x_t$ is.  Thus, in order to show that $S_t$ is a nonnegative supermartingale it remains to be prooved that $\E{S_{t+1}\Big|\ccalG_t} \leq S_t$, which we do next. Using the linearity of the expectation and the bound for $\E{\varphi_k(x_{t+1})\Big| \ccalG_t} $ derived in \eqref{eqn_supermartingale1} we have that
\begin{equation}\label{eqn_half_supermartingale}
\begin{split}
\E{S_{t+1}\Big| \ccalG_t} &\leq \varphi_k(x_t)  +\sum_{s=t}^\infty \varepsilon_s^2\frac{LB^2}{2} \\
&-\varepsilon_t  \E{\nabla \varphi_k(x_t)^T\hat{g}_t(x_t,\theta_t)\Big| \ccalG_t}.
\end{split}
\end{equation}
Since we are considering an unbiased estimator satisfying \eqref{eqn_estimate_form}, we have that $\E{\hat{g}_t(x_t,\theta_t)\Big|\ccalG_t}=\alpha(x_t) \nabla \varphi_k(x_t)$ and therefore 
\begin{equation}
\E{\nabla \varphi_k(x_t)^T\hat{g}_t(x_t,\theta_t)\Big| \ccalG_t} = \alpha(x)\|\nabla \varphi_k(x_t)\|^2\geq 0
\end{equation}
since $\alpha(x)$ is strictly positive (c.f. Assumption \ref{assumption_estimator_model}). This completes the proof that $S_t$ is non negative supermartingale. Thus we have that (see e.g. Theorem 5.2.9 in \cite{durrett2010probability})%
\begin{equation}\label{eqn_convergence_to_random_variable}
\lim_{t\to\infty} S_t = S \quad \mbox{a.e.},
\end{equation}
where $S$ is a random variable such that $\E{S} \leq \E{S_0}$ and 
\begin{equation}
\sum_{t=0}^\infty \varepsilon_t \alpha(x_t)\|\nabla \varphi(x_t)\|^2 < \infty \quad \mbox{a.e.}.
\end{equation}
Since the sequence of step sizes $\left\{\varepsilon_t,t \geq 0\right\}$ is not summable and $\alpha(x)$ is bounded away from zero (c.f. Assumption \ref{assumption_estimator_model}) the convergence of the above series implies that 
\begin{equation}
\liminf_{t\to\infty} \|\nabla \varphi(x_t)\|^2  = 0 \quad \mbox{a.e.}.
\end{equation}
Therefore, there exists a subsequence $\{x_{t_s}, s\in\mathbb{N}\cup\{0\}\}$ that converges to the set of critical points of the navigation function  $\varphi_k(x)$. Since the limit of $S_t$ exists we have that
\begin{equation}
\lim_{s\to\infty} \varphi_k(x_{t_s}) = S \quad \mbox{a.e.}
\end{equation}
Moreover the critical points of the navigation function are hyperbolic (c.f. Definition \ref{def_navigation_function}), and therefore the limit of the sequence $x_t$ generated by the update \eqref{eqn_gradient_descent} is either the minimum of $\varphi_k(x)$ or one of the saddles of $\varphi_k(x)$. Thus completing the proof of the lemma.
\end{proof}
%
The previous lemma states that with probability one the update \eqref{eqn_gradient_descent} results in a sequence that converges to either the minimum of the navigation function $\varphi_k(x)$ or to one of its saddle points. In the deterministic and continuous time framework, the stable manifold of the saddles has zero measure and therefore, for a set of initial conditions of measure one we can guarantee convergence to its minimum. The next lemma is the analogous of this statement for the stochastic setting, where we show that the probability of converging to a saddle is zero. We state the result in its generic form for any hyperbolic function. 
%
%
\begin{lemma}\label{lemma_non_convergence_to_saddle}
Let $V(x): \ccalF \to \mathbb{R}$ be a hyperbolic function. Consider the sequence generated by the update of the form given in \eqref{eqn_gradient_descent} for which $\hat{g}_t(x_t,\theta_t)$ satisfies 
\begin{equation}\label{lemma_energy_function_eqn1}
\E{\hat{g}_t^T(x_t,\theta_t)\nabla V(x_t)\Big| \ccalG_t} > 0,
\end{equation}
if $x_t$ is not a critical point of $V(x)$ and
\begin{equation}\label{lemma_energy_function_eqn2}
\E{\hat{g}_t^T(x_t,\theta_t)\nabla V(x_t)\Big| \ccalG_t} = 0,
\end{equation}
if $x_t$ is a critical point of $V(x)$. Then for any $x_0 \in\ccalF$, the probability of the sequence $\left\{x_t, t\geq 0 \right\}$ converging to a saddle point of $V(x)$ is zero. 
\end{lemma}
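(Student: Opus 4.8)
The plan is to prove this the way one rules out convergence of a stochastic approximation scheme to an unstable equilibrium, following the localization‑and‑projection argument of Pemantle. First I would reduce to a single saddle. Since $V$ is hyperbolic its critical points are non‑degenerate, hence isolated, so the saddles form a countable set; it therefore suffices to show $\Pr{\lim_{t} x_t = x_c} = 0$ for each saddle $x_c$ of $V$, and the lemma follows by summing over saddles. Convergence to the minimum is not excluded and need not be.

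Fix a saddle $x_c$. On the event $E := \{x_t \to x_c\}$ the iterates eventually stay inside any prescribed neighbourhood $N$ of $x_c$, so I would pass to the process stopped the first time it leaves $N$ and show the stopped process converges to $x_c$ with probability zero; this is enough since on $E$ the two processes eventually agree. Write $h(x) := \E{\hat g_t(x_t,\theta_t) \mid \ccalG_t}$ evaluated at $x_t = x$ and $\xi_t := \hat g_t(x_t,\theta_t) - h(x_t)$, so that $\{\xi_t\}$ is a martingale‑difference sequence (with conditionally bounded increments on $N$, as holds in the application via Assumption~\ref{assumption_estimator_model}) and the update reads $x_{t+1} = x_t - \varepsilon_t h(x_t) - \varepsilon_t \xi_t$. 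Hypotheses \eqref{lemma_energy_function_eqn1}--\eqref{lemma_energy_function_eqn2} force $h(x_c) = 0$ — otherwise, near $x_c$, $\langle h(x),\nabla V(x)\rangle \approx \langle h(x_c),\nabla^2 V(x_c)(x-x_c)\rangle$ would change sign as $x-x_c$ varies over directions, contradicting \eqref{lemma_energy_function_eqn1} — and they make $V$ a strict Lyapunov function for the mean flow $\dot x = -h(x)$. Since $x_c$ is a non‑degenerate saddle of $V$, the cone where $V < V(x_c)$ is nonempty and no trajectory of $\dot x = -h(x)$ entering it near $x_c$ can converge to $x_c$; hence $x_c$ is not asymptotically stable for the mean flow, and its linearisation admits a nontrivial unstable direction $u$ with expansion rate $\lambda_u > 0$ (in the intended application this step is immediate, since by the Regularity Assumption in Assumption~\ref{assumption_bias} the Jacobian of $-h$ at $x_c$ inherits the sign pattern of $-\nabla^2 V(x_c)$).

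The core of the argument is then the scalar process obtained by projecting onto this unstable direction. To absorb the nonlinear part of $h$ I would use the local stable/unstable invariant‑manifold splitting at $x_c$ and let $Y_t$ be the unstable coordinate of $x_t - x_c$, with $\zeta_t$ the corresponding unstable coordinate of the noise $\xi_t$. In these coordinates the recursion becomes $Y_{t+1} = (1 + \varepsilon_t \lambda_u) Y_t - \varepsilon_t \zeta_t + \varepsilon_t\, o(\|x_t - x_c\|)$, a one‑dimensional recursion whose linear factor exceeds one, perturbed by a conditionally‑centred term and a correction negligible on $N$. The classical ``repelling recursion'' lemma then gives $\Pr{Y_t \to 0} = 0$, provided the step sizes are not summable but square summable (Assumption~\ref{assumption_step_size}) and the noise does not degenerate along $u$, i.e.\ $\E{|\zeta_t| \mid \ccalG_t}$ stays bounded away from $0$ on $N$. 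Since $E \subseteq \{Y_t \to 0\}$, this yields $\Pr{E} = 0$, and summing over saddles finishes the proof.

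The main obstacle is exactly this last input: the assumptions of the lemma only pin down the projection of $\hat g_t$ onto $\nabla V$, so on their own they do not guarantee that the fluctuation $\xi_t$ excites the unstable direction of the mean flow. I would supply this either as an added non‑degeneracy requirement on $\hat g_t$ — which holds for the sensor models of Section~\ref{sec_sensor_model}, where the measurement noise is present in every direction — or directly from the explicit structure of the estimator. The remaining pieces — the invariant‑manifold change of coordinates, the stopping‑time bookkeeping at $\partial N$, and the proof of the scalar repelling‑recursion lemma with decreasing gains — are standard, and I would not expect surprises there.
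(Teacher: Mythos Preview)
Your approach is sound and is the standard Pemantle‐type argument. The paper's proof follows the same high‐level strategy --- linearize at the saddle, project onto an eigenvector of $\nabla^2 V(x_c)$ associated with a negative eigenvalue, and argue that the expanding linear factor together with the noise forces the projected coordinate away from zero almost surely --- but the execution is different. Instead of invoking a repelling‐recursion lemma and invariant‐manifold coordinates, the paper writes out the product expansion $(x_{t+1}-x_c)_i = \prod_{s=0}^{t-1}(1-\varepsilon_s\lambda_i)\bigl[(1-\varepsilon_t\lambda_i)(x_0-x_c)_i + C_t\bigr]$ explicitly, specializes to the step size $\varepsilon_t = \varepsilon_0/(1+\zeta t)$, and uses Gamma‐function asymptotics to show that the prefactor diverges while $C_t$ converges to a finite random limit $C$; convergence to $x_c$ would then force $C = -(x_0-x_c)_i$, a measure‐zero event. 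Your route is more robust in that it does not rely on a particular step‐size form, and you are more careful in working with the actual mean field $h(x)=\E{\hat g_t\mid\ccalG_t}$ rather than with $\nabla V$: the paper's proof simply takes $\E{\hat g_t\mid\ccalG_t}=\nabla V(x_t)$, which is stronger than what the lemma's hypotheses \eqref{lemma_energy_function_eqn1}--\eqref{lemma_energy_function_eqn2} provide. You are also right to flag the non‐degeneracy of the noise along the unstable direction as an additional input not contained in the statement --- the paper needs exactly the same thing (its last step assumes the projections $(e_s)_i$ are genuinely random and independent of $x_0$) but does not isolate it as a hypothesis.
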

\begin{proof}
See Section \ref{ap_non_convergence_to_saddle}
\end{proof}
%
%
As mentioned before, Lemma \ref{lemma_non_convergence_to_saddle} is more general than what is needed to show that the probability of converging to the saddle point of the navigation function is zero. In particular observe that by substituting $V(x)$ by $\varphi_k(x)$ and considering the case of an unbiased estimator the left hand side of \eqref{lemma_energy_function_eqn1} and \eqref{lemma_energy_function_eqn2} yields $\alpha(x_t)\|\varphi_k(x_t)\|^2$ which is strictly positive if $x_t$ is not a critical point of $\varphi_k(x)$ and is zero if $x_t$ is a critical point of $\varphi_k(x)$. Therefore in the particular case where we take $V(x)$ to be the navigation function $\varphi_k(x)$ and $\hat{g}_t(x_t,\theta_t)$ to be an unbiased estimator of the gradient of the navigation function the above lemma states that with probability zero the sequence $\left\{x_t \in \mathbb{R}^n, t \in \mathbb{N}\cup \left\{0\right\}\right\}$ given by the update \eqref{eqn_gradient_descent} converges to a saddle point of the navigation function $\varphi_k(x)$ for any initial position $x_0\in\ccalF$. Thus, by combining lemmas \ref{lemma_convergence_to_critical_points} and \ref{lemma_non_convergence_to_saddle} we can show convergence to the minimum of the navigation function with probability one. This is the subject of the following Theorem where we establish that an agent that has available an unbiased estimate of the gradient of the navigation function $\varphi_k(x)$ defined in \eqref{eqn_navigation_function} converges to $x^*$ if $f_0(x^*)=0$ or to a point that is arbitrarily close to the minimum of the objective function $x^*$ if $f_0(x^*)\neq 0$ with probability one. 
%
%
\begin{theorem}\label{theo_unbiased}
Let $\ccalF$ be the free space defined in \eqref{eqn_freespace} verifying Assumption \ref{as_obstacles} and let $f_0:\ccalX \to \mathbb{R}$ be a function satisfying Assumption \ref{assum_objective_function} with minimum at $x^*$. Consider the artificial potential $\varphi_k:\ccalF\to[0,1]$ defined in \eqref{eqn_navigation_function} and let $\hat{g}_t(x_t,\theta_t)$ be an unbiased estimate of $\nabla \varphi_k(x)$ satisfying Assumption \ref{assumption_estimator_model}. Also let \eqref{eqn_condition_general} hold for all $i=1\ldots m$. Let $\left\{x_t, t\geq 0\right\}$ be the sequence generated by the update \eqref{eqn_gradient_descent} with a step size satisfying Assumption \ref{assumption_step_size} and $\varepsilon_0 <\min_i \gamma_i/B$ with $\gamma$ and $B$ defined in Assumption \ref{assumption_estimator_model}. Then for every $\delta>0$, there exists a constant $K$ such that if $k>K$, we have that  $\left\{x_t, t\geq0 \right\} \in \ccalF$ and 
\begin{equation}
\lim_{t\to \infty} x_t = x^* \quad \mbox{a.e.},
\end{equation}
if $f_0(x^*)=0$, or
\begin{equation}
\lim_{t\to \infty} x_t = \bar{x} \quad \mbox{a.e.},
\end{equation}
when $f_0(x^*)\neq0$, where $\|\bar{x}-x^*\|<\delta$. 
\end{theorem}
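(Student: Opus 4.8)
The plan is to assemble the statement from the three lemmas of this section together with Theorem~\ref{theo_navigation_function}. The logical skeleton is: (i)~non-collision is guaranteed directly by the step-size hypothesis; (ii)~the iterates converge almost surely to a critical point of $\varphi_k$; (iii)~the probability of landing on a saddle of $\varphi_k$ is zero; (iv)~the only remaining critical point is the minimizer of $\varphi_k$, which Theorem~\ref{theo_navigation_function} places at $x^*$ (when $f_0(x^*)=0$) or within $\delta$ of it (when $f_0(x^*)\neq 0$) once $k$ is large enough.

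First I would establish feasibility. Since $\varepsilon_0<\min_i\gamma_i/B$ and $\hat{g}_t(x_t,\theta_t)$ obeys Assumption~\ref{assumption_estimator_model}, Lemma~\ref{lemma_non_collision} applies verbatim and yields $\{x_t,\,t\geq 0\}\subset\ccalF$ for every realization; this part requires no largeness of $k$, so it already covers the $\ccalF$-containment claim for all $k$.

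Next, fix $\delta>0$. By Theorem~\ref{theo_navigation_function}, since \eqref{eqn_condition_general} holds for all $i$, there is a threshold $K_1$ such that for $k>K_1$ the function $\varphi_k$ is a navigation function whose unique minimizer equals $x^*$ when $f_0(x^*)=0$ and, when $f_0(x^*)\neq 0$, lies within distance $\delta$ of $x^*$ (the ``minimum arbitrarily close to $x^*$'' clause is exactly what lets $K_1$ depend on $\delta$). Denote this minimizer by $\bar x$, so $\bar x=x^*$ in the first case. Then Lemma~\ref{lemma_convergence_to_critical_points}, whose hypotheses hold for the same reasons, provides a threshold $K_2$ such that for $k>K_2$ the iterates converge almost surely to the finite, hyperbolic critical set of $\varphi_k$. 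Finally, because the estimator is unbiased, $\E{\hat{g}_t(x_t,\theta_t)^T\nabla\varphi_k(x_t)\Big|\ccalG_t}=\alpha(x_t)\|\nabla\varphi_k(x_t)\|^2$, which is strictly positive off the critical set and zero on it; hence hypotheses \eqref{lemma_energy_function_eqn1}--\eqref{lemma_energy_function_eqn2} of Lemma~\ref{lemma_non_convergence_to_saddle} are satisfied with $V=\varphi_k$ (a hyperbolic function by Definition~\ref{def_navigation_function}), so the probability of converging to any saddle of $\varphi_k$ is zero. Combining the three lemmas, for $k>K:=\max\{K_1,K_2\}$ the sequence stays in $\ccalF$ and converges almost surely to $\bar x$, giving the two displayed limits.

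The only delicate point is the bookkeeping of the $k$-dependence: one must verify that the thresholds coming from Theorem~\ref{theo_navigation_function} (navigation-function property and $\delta$-closeness of the minimizer) and from Lemma~\ref{lemma_convergence_to_critical_points} (almost-sure convergence to the critical set, including the largeness needed for that set to be isolated and hyperbolic) can be met simultaneously by a single $K$, and that this $K$ is allowed to depend on $\delta$. Beyond this there is no genuine analytic obstacle; the substantive work has already been carried out in Theorem~\ref{theo_navigation_function} and Lemmas~\ref{lemma_convergence_to_critical_points} and~\ref{lemma_non_convergence_to_saddle}.
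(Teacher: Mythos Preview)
Your proposal is correct and follows essentially the same route as the paper's proof: invoke Theorem~\ref{theo_navigation_function} to guarantee $\varphi_k$ is a navigation function with minimum at (or within $\delta$ of) $x^*$, use Lemma~\ref{lemma_non_collision} for feasibility, Lemma~\ref{lemma_convergence_to_critical_points} for a.s.\ convergence to the critical set, and Lemma~\ref{lemma_non_convergence_to_saddle} (with $V=\varphi_k$) to rule out the saddles. Your explicit verification of hypotheses \eqref{lemma_energy_function_eqn1}--\eqref{lemma_energy_function_eqn2} and your bookkeeping $K=\max\{K_1,K_2\}$ simply make precise what the paper leaves implicit.
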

\begin{proof}
From Theorem \ref{theo_navigation_function} it follows that for every $\delta>0$ there exists some $K>0$ such that for any $k>K$ the artificial potential $\varphi_k(x)$ is a navigation function with minimum at $\bar{x}$ satisfying $\|\bar{x}-x^*\|<\delta$ if $f_0(x^*)\neq 0$ and with minimum at $x^*$ otherwise. Then, the fact that the sequence $\left\{x_t, t\geq0\right\}\in\ccalF$ is a direct consequence of Lemma \ref{lemma_non_collision} and the convergence to the minimum of the navigation function is a consequence of lemmas \ref{lemma_convergence_to_critical_points} and \ref{lemma_non_convergence_to_saddle}.
\end{proof}
%
The previous theorem states that an agent who has access to an unbiased estimate of the gradient of a Rimon-Koditschek navigation function succeeds in navigating towards the minimum of the objective function or to a point that is arbitrarily close to it with probability one while remaining on the free space by selecting the tuning parameter $k$ large enough. In section \ref{sec_other_nf} we generalize this result to arbitrary spaces and suitable navigation functions. In the next section we generalize the result of Theorem \ref{theo_unbiased} to case where the estimate biased.
%
\section{Biased Estimator}\label{sec_biased}
In this section we generalize Theorem \ref{theo_unbiased} presented in Section \ref{sec_unbiased} for biased estimators satisfying Assumption \ref{assumption_estimator_model} and \ref{assumption_bias}. The main difference with the unbiased estimator is that the estimate $\hat{g}_t(x_t,\theta_t)$ is not a descent direction in expectation for the navigation function $\varphi_k(x)$. However it can be shown that there exists an energy like function that has the same structural properties as $\varphi_k(x)$ for which the estimate is a descent direction in expectation. We formalize this result in the next lemma. 
\begin{lemma}\label{lemma_energy_function}
Let $\ccalF$ be the free space defined in \eqref{eqn_freespace} verifying Assumption \ref{as_obstacles} and let $f_0:\ccalX \to \mathbb{R}$ be a function satisfying Assumption \ref{assum_objective_function} with minimum at $x^*$. Consider the artificial potential $\varphi_k:\ccalF\to[0,1]$ defined in \eqref{eqn_navigation_function} and let $\hat{g}_t(x_t,\theta_t)$ be an estimate of $\nabla \varphi_k(x)$ satisfying assumptions \ref{assumption_estimator_model} and \ref{assumption_bias}. Also let \eqref{eqn_condition_general} hold for all $i=1\ldots m$. Then, for every $\delta>0$ there is a constant $K$ such that if $k>K$, there exists a twice differentiable function $V_k: \ccalF \to \mathbb{R}$ whose critical points are at a distance smaller than $\delta$ to those of $\varphi_k(x)$. Furthermore, the index of the critical points of the two functions are equal and $V_k(x)$ is such that
\begin{equation}\label{lemma_energy_function_eqn1}
\E{\hat{g}_t^T(x_t,\theta_t)\nabla V_k(x_t)\Big| \ccalG_t} > 0,
\end{equation}
if $x_t$ is not a critical point of $V_k(x)$ and
\begin{equation}\label{lemma_energy_function_eqn2}
\E{\hat{g}_t^T(x_t,\theta_t)\nabla V_k(x_t)\Big| \ccalG_t} = 0,
\end{equation}
if $x_t$ is a critical point of $V_k(x)$. 
\end{lemma}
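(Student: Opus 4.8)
The plan is to recast the two claimed inequalities as a Lyapunov statement for the (possibly discontinuous) vector field $w_k(x):=\nabla\varphi_k(x)+b_k(x)$. By the biased-estimator form in Assumption \ref{assumption_estimator_model}, $\E{\hat g_t^T(x_t,\theta_t)\nabla V_k(x_t)\,\big|\,\ccalG_t}=\alpha(x_t)\,\nabla V_k(x_t)^Tw_k(x_t)$ with $\alpha$ strictly positive, so \eqref{lemma_energy_function_eqn1}--\eqref{lemma_energy_function_eqn2} are equivalent to asking that $V_k$ be $C^2$, that its critical set coincide with the zero set of $w_k$, and that $\nabla V_k(x)^Tw_k(x)>0$ whenever $\nabla V_k(x)\neq 0$; that is, $V_k$ should be a strict Lyapunov function for $\dot x=-w_k(x)$. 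It therefore suffices to construct such a $V_k$ whose critical points lie within $\delta$ of those of $\varphi_k$ and carry the same Morse index.

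First I would locate and classify the zeros of $w_k$. Away from the critical points of $\varphi_k$ one has $\nabla\varphi_k(x)^Tw_k(x)=\|\nabla\varphi_k(x)\|^2+\nabla\varphi_k(x)^Tb_k(x)\ge\|\nabla\varphi_k(x)\|\bigl(\|\nabla\varphi_k(x)\|-\|b_k(x)\|\bigr)$, which is strictly positive once $k$ is large, using $\|b_k\|=O(1/k)$ on the interior and $b_k=0$ on $\partial\ccalF$ (Assumption \ref{assumption_bias}); hence $w_k$ has no zeros there, $\varphi_k$ itself strictly decreases along $-w_k$ on that region, and the discontinuity surfaces of $b_k$ --- which by Assumption \ref{assumption_bias} lie away from $\partial\ccalF$ and from the critical points --- are contained in it. Near a critical point $x_c$ of $\varphi_k$, nondegenerate because $\varphi_k$ is Morse (Theorem \ref{theo_navigation_function}), the equation $w_k(x)=0$ reads $\nabla^2\varphi_k(x_c)(x-x_c)=-b_k(x)+O(\|x-x_c\|^2)$ and, by a contraction-mapping/implicit-function argument, has a unique solution $x_c'$ with $\|x_c'-x_c\|=O(\|b_k\|)$; by the bias estimates collected in Section \ref{sec_sensor_model} this is below $\delta$ for $k>K$. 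Moreover, by \eqref{eqn_quotient_jacobians1}--\eqref{eqn_quotient_jacobians2} and the $C^1$-smallness of $b_k$ near the critical points (the Regularity Assumption), $Jw_k(x_c')=\nabla^2\varphi_k(x_c')+Jb_k(x_c')$ is a small perturbation of the symmetric matrix $\nabla^2\varphi_k(x_c)$, hence hyperbolic with the same number of eigenvalues of each sign; so $x_c'$ is a hyperbolic equilibrium of $-w_k$ with the same index as $x_c$, and structural stability of the Morse field $\nabla\varphi_k$ (the Regularity Assumption, cf.\ \cite{palis1970structural}) rules out any further equilibria.

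Next I would assemble $V_k$ by patching. On the complement of the balls $B(x_c',\rho_k)$ put $V_k:=\varphi_k$, where $\nabla V_k^Tw_k=\nabla\varphi_k^Tw_k>0$. Near each $x_c'$ write $A:=Jw_k(x_c')$ and invoke the main inertia (Ostrowski--Schneider) theorem: since $A$ is hyperbolic there is a symmetric $Q_c$ with $Q_cA+A^TQ_c\succ 0$, and the inertia of $Q_c$ is forced to equal that of $A$, so $Q_c$ has exactly $\mathrm{index}(x_c)$ negative eigenvalues. Set $V_k(x):=c_c+\tfrac12(x-x_c')^TQ_c(x-x_c')$ on $B(x_c',\rho_k)$; this is smooth, Morse at $x_c'$ with the correct index, and $\nabla V_k(x)^Tw_k(x)=\tfrac12(x-x_c')^T\bigl(Q_cA+A^TQ_c\bigr)(x-x_c')+O(\|x-x_c'\|^3)>0$ for $0<\|x-x_c'\|<\rho_k$ when $\rho_k$ is small. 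On a thin overlap annulus, on which for $k$ large $\nabla\varphi_k^Tw_k>0$ also holds, glue the two pieces with a $C^\infty$ cutoff and choose the constants $c_c$ so the pieces agree to high order there; since on the annulus both ingredients strictly decrease along $-w_k$, so does any such blend. The outcome is a $C^2$ (indeed piecewise-$C^\infty$) function $V_k$ with $\mathrm{Crit}(V_k)=\{x_c'\}$, each $x_c'$ within $\delta$ of $\mathrm{Crit}(\varphi_k)$ and of matching index, satisfying $\nabla V_k^Tw_k>0$ off $\mathrm{Crit}(V_k)$ and $=0$ on it; multiplying by $\alpha(x_t)>0$ gives \eqref{lemma_energy_function_eqn1}--\eqref{lemma_energy_function_eqn2}. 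One could alternatively, after checking that $-w_k$ is gradient-like with hyperbolic equilibria of the stated indices, quote the existence of a smooth energy function for such flows.

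I expect the gluing to be the main obstacle: one must verify that for $k$ large a single radius $\rho_k$ can serve both as an outer radius, exceeding the radius of the ``bad set'' $\{x:\nabla\varphi_k(x)^Tw_k(x)\le 0\}$ around $x_c$ --- which shrinks by the bias estimates of Section \ref{sec_sensor_model} --- and as an inner radius, small enough that the cubic remainder in the local quadratic estimate is dominated by $\tfrac12(x-x_c')^T(Q_cA+A^TQ_c)(x-x_c')$. This is precisely where the Regularity Assumption and the Jacobian-quotient bounds \eqref{eqn_quotient_jacobians1}--\eqref{eqn_quotient_jacobians2} are indispensable, as they pin down how $Q_c$, the least eigenvalue of $Q_cA+A^TQ_c$, and the local $C^2$ size of $w_k$ scale with $k$ at the critical points; the $\delta$-localization and index-matching of $\mathrm{Crit}(V_k)$ then follow from the same perturbation estimates.
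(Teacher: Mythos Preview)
Your alternative at the very end---show that $-w_k$ is gradient-like with hyperbolic equilibria of the right indices and then quote an abstract energy-function existence result---is exactly the paper's proof. The paper smooths $b_k$ to $b_k^{\mathrm{diff}}$, proves $\nabla\varphi_k+b_k^{\mathrm{diff}}$ is gradient-like (using $\nabla\varphi_k^T w_k>0$ away from critical points, and structural stability plus the Regularity Assumption near them, just as you do), and then invokes Smale's theorem on self-indexing energy functions to obtain $V_k$; the piecewise-differentiable $b_k$ is handled afterward by noting its discontinuities lie in the region where $\varphi_k$ already decreases along $-w_k$.

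Your primary, constructive route has a real gap at the gluing step. If $V_k=\chi\,\varphi_k+(1-\chi)\,q$ with $q(x)=c_c+\tfrac12(x-x_c')^TQ_c(x-x_c')$ and a cutoff $\chi$, then
\[
\nabla V_k^T w_k \;=\; \chi\,\nabla\varphi_k^T w_k+(1-\chi)\,\nabla q^T w_k+(\varphi_k-q)\,\nabla\chi^T w_k,
\]
and the last term need not be nonnegative. Your sentence ``since on the annulus both ingredients strictly decrease along $-w_k$, so does any such blend'' is therefore unjustified: a convex combination with spatially varying weights is not a Lyapunov function in general. The fix you propose---``choose the constants $c_c$ so the pieces agree to high order''---is not available: with a single additive constant you can match values at one point, not first- or second-order behavior, and on the annulus $\varphi_k-q$ is generically of the same order as the positive terms (the two quadratics are centered at $x_c$ and $x_c'$ respectively and have different Hessians $\nabla^2\varphi_k(x_c)$ versus $Q_c$). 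Making this patching rigorous requires either a much more delicate construction (e.g.\ Wilson/Conley-type converse Lyapunov arguments that flow the level sets rather than blend functions) or precisely the abstract Smale theorem you cite as the alternative. Everything else in your proposal---the localization of zeros of $w_k$ near $\mathrm{Crit}(\varphi_k)$, the index matching via the inertia of $Jw_k(x_c')$, and the observation that the discontinuities of $b_k$ sit where $\varphi_k$ already serves as a Lyapunov function---is correct and matches the paper's argument that the perturbed field is gradient-like.
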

\begin{proof}
See Appendix \ref{ap_energy_function}. 
\end{proof}
%
%
In the above lemma we established the existence of an energy function for which the expected value of the estimate of the gradient of the navigation function  $\hat{g}_t(x_t,\theta_t)$ is a descent direction. In particular, the critical points of this energy function are arbitrarily close to those of the navigation function $\varphi_k(x)$. We are now in conditions of stating an proving the main result of the work, where we show that an agent that descends along the direction of a biased estimator of the gradient of a navigation function converges with probability one to a point that is arbitrarily close to the minimum of $f_0(x)$. We formalize this result next. 
%
%
\begin{theorem}\label{theo_biased}
Let $\ccalF$ be the free space defined in \eqref{eqn_freespace} verifying Assumption \ref{as_obstacles} and let $f_0:\ccalX \to \mathbb{R}$ be a function satisfying Assumption \ref{assum_objective_function} with minimum at $x^*$. Consider the artificial potential $\varphi_k:\ccalF\to[0,1]$ defined in \eqref{eqn_navigation_function} and let $\hat{g}_t(x_t,\theta_t)$ be an estimate of $\nabla \varphi_k(x)$ satisfying assumptions \ref{assumption_estimator_model} and \ref{assumption_bias}. Also let \eqref{eqn_condition_general} hold for all $i=1\ldots m$. Let $\left\{x_t, t\geq 0\right\}$ be the sequence generated by the update \eqref{eqn_gradient_descent} with a step size satisfying Assumption \ref{assumption_step_size} and $\varepsilon_0 <\min_i \gamma_i/B$ with $\gamma$ and $B$ defined in Assumption \ref{assumption_estimator_model}. Then for every $\delta>0$, there exists a constant $K$ such that if $k>K$, we have that  $\left\{x_t, t\geq0 \right\} \in \ccalF$ and 
\begin{equation}
\lim_{t\to \infty} x_t = \bar{x} \quad \mbox{a.e.},
\end{equation}
where $\bar{x}$ is a point arbitrarily close to $x^*$. 
\end{theorem}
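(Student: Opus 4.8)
The plan is to assemble the machinery already built in the preceding lemmas; Theorem \ref{theo_biased} is essentially a bookkeeping combination of Theorem \ref{theo_navigation_function}, Lemma \ref{lemma_non_collision}, Lemma \ref{lemma_energy_function} and Lemma \ref{lemma_non_convergence_to_saddle}, with one genuine piece of stochastic-approximation analysis in the middle. Fix $\delta>0$. First I would invoke Theorem \ref{theo_navigation_function} --- this is where condition \eqref{eqn_condition_general} enters --- to obtain $K_1$ such that for $k>K_1$ the potential $\varphi_k$ is a navigation function whose minimum $x^*_{\varphi_k}$ lies within $\delta/2$ of $x^*$. Next, applying Lemma \ref{lemma_energy_function} with parameter $\delta/2$ yields $K_2$ such that for $k>K:=\max(K_1,K_2)$ there is a $C^2$ function $V_k:\ccalF\to\mathbb{R}$ whose critical set is in index-preserving bijection with that of $\varphi_k$, each critical point of $V_k$ lying within $\delta/2$ of the corresponding critical point of $\varphi_k$, and for which $\hat g_t$ is a descent direction in conditional expectation (relations \eqref{lemma_energy_function_eqn1}--\eqref{lemma_energy_function_eqn2}). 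In particular $V_k$ has a unique index-$0$ critical point $\bar x$ which --- all other critical points being saddles --- is its global minimum, with $\|\bar x-x^*\|<\delta$. Since $\varepsilon_0<\min_i\gamma_i/B$, Lemma \ref{lemma_non_collision} already gives $\{x_t\}_{t\ge0}\subset\ccalF$, so it remains only to prove that $x_t\to\bar x$ almost surely.

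The core step is to show that $x_t$ converges almost surely to the critical set of $V_k$, reproducing the argument of Lemma \ref{lemma_convergence_to_critical_points} with $V_k$ playing the role of $\varphi_k$. Since $\ccalF$ is compact and $V_k$ is $C^2$, both $V_k$ and its Hessian are bounded on $\ccalF$; expanding $V_k$ to second order along the update \eqref{eqn_gradient_descent}, taking the conditional expectation, and using $\|\hat g_t\|\le B$ (Assumption \ref{assumption_estimator_model}) together with the descent property \eqref{lemma_energy_function_eqn1} gives
\begin{equation}
\E{V_k(x_{t+1})\big|\ccalG_t}\ \le\ V_k(x_t)\ -\ \varepsilon_t\, h_k(x_t)\ +\ \varepsilon_t^2\,\frac{L'B^2}{2},
\end{equation}
with $h_k(x_t):=\E{\hat g_t^T\nabla V_k(x_t)\,\big|\,\ccalG_t}\ge0$ and $L'$ an upper bound for $\|\nabla^2 V_k\|$ on $\ccalF$. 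Then $S_t:=V_k(x_t)-\min_{\ccalF}V_k+\sum_{s\ge t}\varepsilon_s^2 L'B^2/2$ is a nonnegative, integrable, $\ccalG_t$-adapted supermartingale (here Assumption \ref{assumption_step_size} is used), so it converges almost surely while $\sum_t\varepsilon_t h_k(x_t)<\infty$ almost surely; since $\sum_t\varepsilon_t=\infty$, this forces $\liminf_t h_k(x_t)=0$ almost surely. Because $h_k$ is bounded, vanishes only on the critical set of $V_k$, and its discontinuities --- inherited from $b_k$ --- sit away from the obstacles and hence away from the saddles, a compactness argument yields a subsequence of $\{x_t\}$ approaching the critical set; as the increments satisfy $\|x_{t+1}-x_t\|\le\varepsilon_t B\to0$ and the critical points are isolated, convergence of $V_k(x_t)$ then forces the whole sequence to converge to a single critical point of $V_k$ almost surely.

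Finally I would apply Lemma \ref{lemma_non_convergence_to_saddle} with $V=V_k$: its hypotheses are precisely \eqref{lemma_energy_function_eqn1}--\eqref{lemma_energy_function_eqn2}, which Lemma \ref{lemma_energy_function} supplies, so the probability that $\{x_t\}$ converges to a saddle point of $V_k$ is zero. Together with the preceding paragraph this gives $x_t\to\bar x$ almost surely, and since $\|\bar x-x^*\|<\delta$ and $\{x_t\}\subset\ccalF$ this is exactly the assertion of the theorem.

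The main obstacle is the middle step, the almost-sure convergence to the critical set of $V_k$: everything genuinely hard about the biased setting --- that a $C^2$ Lyapunov function $V_k$ for which $-\hat g_t$ is a descent direction in expectation exists at all, and that its critical structure shadows that of $\varphi_k$ --- has been isolated in Lemma \ref{lemma_energy_function}, so what remains is the Robbins--Siegmund supermartingale estimate together with the upgrade from subsequential to full convergence, whose only subtlety is keeping the descent gap $h_k$ bounded away from zero off the critical set despite the (measure-zero, obstacle-free) discontinuity locus of the bias.
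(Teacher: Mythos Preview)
Your proposal is correct and follows essentially the same route as the paper: non-collision via Lemma \ref{lemma_non_collision}, existence of the energy function $V_k$ via Lemma \ref{lemma_energy_function}, a supermartingale argument (the analogue of Lemma \ref{lemma_convergence_to_critical_points} for $V_k$) to reach the critical set, and Lemma \ref{lemma_non_convergence_to_saddle} to exclude the saddles. The paper's own proof is terser---it simply asserts that ``Lemma \ref{lemma_convergence_to_critical_points} holds for the self indexing energy function'' rather than rerunning the Robbins--Siegmund estimate---but your added detail (the $\delta/2$ splitting, the explicit supermartingale $S_t$, and the remark about the discontinuity locus of $h_k$) only makes the argument more careful, not different.
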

\begin{proof}
Observe that non collision is ensured by virtue of Lemma \ref{lemma_non_collision}. Moreover because of Lemma \ref{lemma_energy_function} we know that there exists an energy function such that its critical points are arbitrarily close to those of $\varphi_k(x)$ and the indexes of said critical points are the same for both functions. Thus Lemma \ref{lemma_convergence_to_critical_points} holds for the self indexing energy function. Finally for $k$ large enough $\varphi_k(x)$ is a navigation function and thus Lemma \ref{lemma_non_convergence_to_saddle} and Theorem \ref{theo_navigation_function} hold completing the proof.
\end{proof}
%
The above theorem states that under the same conditions on the free space and the objective function than in the deterministic case, by following the update \eqref{eqn_gradient_descent} the agent is able, with probability one, to reach a point arbitrarily close to the minimum of the objective function $f_0(x)$ without running into the free space boundary. In particular, the update is performed by considering only local information about the objective function and the obstacles whereas in the construction in \cite{PaternainEtal15} (Theorem \ref{theo_navigation_function}) complete information about the obstacles is needed. Furthermore, instead of requiring exact information about both the objective function and the obstacles, stochastic measurements suffice to solve the problem of interest. Notice that in Theorem \ref{theo_biased} it is implicitly stated the need of satisfying condition \eqref{eqn_condition_general}. Thus for the stochastic case the same comments than in the deterministic case regarding the geometry of the free space and the condition number of the Hessian of the objective function are pertinent. This is, it is easier to navigate the free space when the obstacles and the level sets of the objective function are close to spheres.

Observe that the bias of the estimator accounts for a mismatch between the real free space and the one that is hallucinated by the agent. As explained in sections \ref{sec_circles} and \ref{sec_ellipse} there are three main components of this bias; the obstacles that cannot be measured since they are far away from the agent, the error introduces for assuming a specific model of the obstacles (circles or ellipses) and the error in the estimation of the parameters of the model. In that sense, the Regularity Assumption tells us that the perception that the agent has about the world is not that different from the real world when the configuration of the robot is in a neighborhood of the saddle points of the navigation function.

A difference between the results in Theorem \ref{theo_navigation_function} -- complete and deterministic -- and Theorems \ref{theo_unbiased} and \ref{theo_biased} -- local and stochastic -- is in the sense in which the navigation is almost surely. While in the deterministic case the navigation is almost surely in the sense that except for a set of initial positions of measure zero --the stable manifold of the saddle points of $\varphi_k(x)$ -- the solutions of the dynamical system $\dot{x}= -\nabla \varphi_k(x)$ converge to the minimum of the objective function; in the stochastic case the goal is achieved with probability one. This means, that for any initial position the probability of converging to minimum of $f_0(x)$ is one. Even when the initial position of the system is a saddle point of $\varphi_k(x)$. 
%
%
\section{Alternative Artificial Potentials}\label{sec_other_nf}
Throughout this paper we focused on navigation functions that are of the Rimon Koditschek form, however the results here presented can be generalized to larger classes of artificial potentials. We devote the current section to do so by considering the generic case of any navigation function for which it is possible to build an unbiased estimator of its gradient and for biased gradients of a potential where the obstacles are encoded by a logarithmic barrier. In Section \ref{sec_biased} we showed that under certain geometrical conditions of the free space and the objective function an agent is able to navigate towards to the minimum of the objective function --or to a point that is arbitrarely close-- with probability one while remaining in the free space if the agent has access to an unbiased estimate of the gradient of a Rimon-Koditschek navigation function (Theorem \ref{theo_biased}). We next generalize this result to any free space and suitable navigation functions as long as the estimate of its gradient is unbiased. This allows to consider different families of navigation functions that are suitable for other geometries of the free space e.g. harmonic functions to navigate topologically complex spaces \cite{loizou2011closed,loizou2012navigation}.
%
%
\begin{corollary}
Let $\ccalF$ be a free space and let $\varphi: \ccalF\to [0,1]$ be a navigation function (c.f. Definition \ref{def_navigation_function}) with minimum at the agent's goal $x^*$. Let $\hat{g}_t(x_t,\theta_t)$ be an unbiased estimate of the gradient of the navigation function satisfying Assumption \ref{assumption_estimator_model}. Then the update rule \eqref{eqn_gradient_descent} generates a sequence $\left\{x_t,t\geq0 \right\}\in \ccalF$ and such that $\lim_{t\to\infty} x_t =x^*$. 
\end{corollary}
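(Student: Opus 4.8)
The plan is to reproduce, almost verbatim, the three-step argument behind Theorem~\ref{theo_unbiased}, but with the generic navigation function $\varphi$ in place of the Rimon--Koditschek potential $\varphi_k$. The point is that each of the three ingredients used there --- non-collision (Lemma~\ref{lemma_non_collision}), almost-sure convergence to the critical set (Lemma~\ref{lemma_convergence_to_critical_points}), and zero probability of converging to a saddle (Lemma~\ref{lemma_non_convergence_to_saddle}) --- invokes $\varphi_k$ only through the abstract properties of a navigation function (Definition~\ref{def_navigation_function}: twice differentiable, polar at $x^*$, Morse, admissible, with range $[0,1]$ on a compact manifold) together with Assumption~\ref{assumption_estimator_model} on the estimator, never through the specific form \eqref{eqn_navigation_function}. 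So the corollary should follow by re-running those proofs with the step size chosen as in the theorems, i.e.\ satisfying Assumption~\ref{assumption_step_size} with $\varepsilon_0<\min_i\gamma_i/B$.

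I would first settle that $\left\{x_t,\,t\ge 0\right\}\subset\ccalF$. The proof of Lemma~\ref{lemma_non_collision} uses only the \emph{bounded} and \emph{points outwards the obstacles} clauses of Assumption~\ref{assumption_estimator_model}: near each boundary component of $\ccalF$ one uses a local defining function in the role of $\beta_i$, and the triangle inequality $d_i(x_{t+1})\ge d_i(x_t)-\varepsilon_t\|\hat{g}_t(x_t,\theta_t)\|$ with $\varepsilon_0<\min_i\gamma_i/B$ keeps the iterate in $\ccalF$ when it is farther than $\gamma_i$ from that component, while the outward-pointing clause handles the near-boundary regime. This is the step that transfers least automatically, and is where I expect the only real care to be needed, since a generic free space does not come with the explicit factorization $\beta=\prod_i\beta_i$; one either reinterprets the outward-pointing clause against the local defining functions of the boundary components, or simply postulates that clause of Assumption~\ref{assumption_estimator_model} directly in the generic setting (which the statement implicitly does).

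For the remaining two steps, since $\ccalF$ is compact and $\varphi$ is twice continuously differentiable, the quadratic form of $\nabla^2\varphi$ is bounded, $\hat{g}^T\nabla^2\varphi(z)\hat{g}\le L\|\hat{g}\|^2$; Taylor-expanding $\varphi(x_{t+1})$ as in \eqref{eqn_taylor_expansion}, conditioning on $\ccalG_t$, and using unbiasedness $\E{\hat{g}_t(x_t,\theta_t)\,\big|\,\ccalG_t}=\alpha(x_t)\nabla\varphi(x_t)$ with $\alpha>0$ bounded below and $\|\hat{g}_t\|\le B$, one checks that
\begin{equation*}
S_t=\varphi(x_t)+\sum_{s=t}^{\infty}\varepsilon_s^2\,\frac{LB^2}{2}
\end{equation*}
is a nonnegative supermartingale (integrable because $\varphi\le 1$ and $\sum_s\varepsilon_s^2<\infty$, adapted because $x_t$ is). Supermartingale convergence gives $S_t\to S$ a.e.\ and $\sum_t\varepsilon_t\,\alpha(x_t)\|\nabla\varphi(x_t)\|^2<\infty$ a.e.; since $\sum_t\varepsilon_t=\infty$ and $\alpha$ is bounded below, $\liminf_t\|\nabla\varphi(x_t)\|=0$ a.e., and because $\varphi$ is Morse on a compact manifold (hence has finitely many non-degenerate critical points) and $\varphi(x_t)$ converges, the whole sequence $x_t$ converges a.e.\ to a single critical point --- the minimum $x^*$ or a saddle. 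Finally, applying Lemma~\ref{lemma_non_convergence_to_saddle} with $V=\varphi$ (hyperbolic because Morse), for which $\E{\hat{g}_t^T(x_t,\theta_t)\nabla\varphi(x_t)\,\big|\,\ccalG_t}=\alpha(x_t)\|\nabla\varphi(x_t)\|^2$ is strictly positive off the critical points and zero at them so its hypotheses are met, rules out convergence to a saddle with probability one. Together with the polarity of $\varphi$ (unique minimum at $x^*$) this yields $\lim_{t\to\infty}x_t=x^*$ a.e., completing the proof.
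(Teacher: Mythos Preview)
Your proposal is correct and follows essentially the same three-step route as the paper's own proof: invoke Lemma~\ref{lemma_non_collision} for non-collision, Lemma~\ref{lemma_convergence_to_critical_points} for almost-sure convergence to the critical set, and Lemma~\ref{lemma_non_convergence_to_saddle} to rule out saddles, observing that none of these depend on the specific Rimon--Koditschek form. Your write-up is in fact more careful than the paper's terse one-paragraph proof, particularly in flagging that the outward-pointing clause of Assumption~\ref{assumption_estimator_model} must be reinterpreted for a generic free space lacking the explicit $\beta=\prod_i\beta_i$ factorization.
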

\begin{proof}
The non collision proof is a direct consequence of \ref{lemma_non_collision} and the convergence to the minimum of the navigation function follows from lemmas \ref{lemma_convergence_to_critical_points} and \ref{lemma_non_convergence_to_saddle}. Observe that these do not depend on the specific form of the free space nor the navigation function selected. 
\end{proof}
The previous result generalizes Theorem \ref{theo_unbiased} for any space and suitable navigation function, meaning that following the sequence that arises from descending along the direction of an unbiased stochastic gradient succeeds in navigating towards the minimum of the objective function without running into the free space boundary. Next, we extend the result for biased estimates (c.f. Theorem \ref{theo_biased}) for a different class of artificial potentials, that of logarithmic barriers.
Inspired in the optimization literature we define the following barrier function
\begin{equation}\label{eqn_log_stoch}
\phi_k(x) = f_0(x)-\frac{1}{k}\log(\beta(x)).
\end{equation}
The previous potential is not a navigation function since it is not defined in the boundary and its image is not bounded between zero and one. However its supremum is at the boundary of the free space and we will show that all the critical points of the previous equation are non degenerate and it has a unique minimum. Differentiate \eqref{eqn_log_stoch} to get 
\begin{equation}
  \nabla \phi_k(x) =  \nabla f_0(x) - \frac{\nabla \beta(x)}{k\beta(x)}.
\end{equation}
Observe that the previous expression is similar to that of the direction of the gradient considered in \ref{eqn_estimate_navigation_direction}. In particular the same fundamental properties of the critical points hold, i.e., non degeneracy and polarity follow from analogous proofs to those in \cite{PaternainEtal15}. Since $\nabla \beta(x)$ is not zero in the boundary of the free space (see proof of Lemma 2 in \cite{PaternainEtal15}) the critical points can be pushed by increasing $k$ either arbitrarily close to the minimum of $f_0(x)$ or arbitrarily close to $\beta(x)$. In particular, the first one can be showed to be a unique local minima and the second ones to be saddles. Furthermore the eigenvalues of the Hessian of these critical points depend on $k$ with the same order as in the case of Rimon-Koditschek artificial potentials. In that sense if we consider the sensor model discussed in Section \ref{sec_sensor_model} the assumptions for the bias of the estimate of the gradient (\ref{assumption_estimator_model} and \ref{assumption_bias}) are reasonable. Hence by following the negative direction of the gradient of $ \phi_k(x))$ we converge to a point arbitrarily close to the minimum of $f_0(x)$. We state formally this theorem after defining the estimate of the descent direction current position $x_t$ and random vector $\theta_t$
\begin{equation}\label{eqn_log_stoch_estimate}
  \hat{g}(x_t,\theta) = \hat{\beta}(x_t,\theta_t)\hat{\nabla}f_0(x_t,\theta_t) - \frac{\hat{\nabla}\beta(x_t,\theta_t)}{k}.
\end{equation}
Observe that the above direction is the estimate of the gradient of $\phi_k(x)$ multiplied by $\beta(x)$, this has been done in order to avoid the norm of the estimate being large near the boundary of the free space.
\begin{theorem}\label{theo_log}
Let $\ccalF$ be the free space defined in \eqref{eqn_freespace} verifying Assumption \ref{as_obstacles} and let $f_0:\ccalX \to \mathbb{R}$ be a function satisfying Assumption \ref{assum_objective_function} with minimum at $x^*$. Consider the artificial potential $\phi_k:\ccalF\to \mathbb{R}$ defined in \eqref{eqn_log_stoch} and let $\hat{g}_t(x_t,\theta_t)$, the estimate defined in \eqref{eqn_log_stoch_estimate} satisfy the assumptions \ref{assumption_estimator_model} and \ref{assumption_bias}. Also let \eqref{eqn_condition_general} hold for all $i=1\ldots m$. Let $\left\{x_t, t\geq 0\right\}$ be the sequence generated by the update \eqref{eqn_gradient_descent} with a step size satisfying Assumption \ref{assumption_step_size} and $\varepsilon_0 <\min_i \gamma_i/B$ with $\gamma$ and $B$ defined in Assumption \ref{assumption_estimator_model}. Then for every $\delta>0$, there exists a constant $K$ such that if $k>K$, we have that  $\left\{x_t, t\geq0 \right\} \in \ccalF$ and 
\begin{equation}
\lim_{t\to \infty} x_t = \bar{x} \quad \mbox{a.e.},
\end{equation}
where $\bar{x}$ is a point arbitrarily close to $x^*$. 
\end{theorem}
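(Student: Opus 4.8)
The plan is to transcribe the architecture of the proof of Theorem \ref{theo_biased}, substituting the logarithmic barrier $\phi_k(x)$ of \eqref{eqn_log_stoch} for the Rimon--Koditschek potential $\varphi_k(x)$ and the scaled estimate of \eqref{eqn_log_stoch_estimate} for the one of \eqref{eqn_estimate_navigation_direction}. Concretely, the argument rests on four blocks, exactly as before: (i) non-collision, (ii) existence of a self-indexing energy function for which the estimate is a descent direction in conditional expectation, (iii) almost-sure convergence to the critical set of that energy function via a supermartingale argument, and (iv) exclusion of saddle limits.

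First I would establish the structural properties of $\phi_k$ that play the role of the navigation-function properties of $\varphi_k$. Since $\nabla\phi_k(x)=\nabla f_0(x)-\nabla\beta(x)/(k\beta(x))$ has exactly the zero set of the direction \eqref{eqn_navigation_direction}, and since $\nabla\beta(x)$ does not vanish on $\partial\ccalF$ (as recalled from \cite{PaternainEtal15}), the same computations as in \cite{PaternainEtal15} give, for $k$ large and under \eqref{eqn_condition_general}: all critical points of $\phi_k$ are non-degenerate, there is a unique local (hence global) minimum within distance $\delta$ of $x^*$, the remaining critical points are saddles lying within $O(1/k)$ of $\partial\ccalF$, and the eigenvalues of $\nabla^2\phi_k$ at those saddles scale with $k$ at the same rates as for $\varphi_k$. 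With these facts, and since the estimate \eqref{eqn_log_stoch_estimate} has conditional mean $\alpha(x)\bigl(\beta(x)\nabla\phi_k(x)+\tilde b_k(x)\bigr)$ with a bias obeying the same analysis as in Section \ref{sec_sensor_model}, the counterpart of Lemma \ref{lemma_energy_function} holds: there is a twice differentiable $V_k:\ccalF\to\mathbb{R}$ whose critical points are within $\delta$ of those of $\phi_k$, share their indices, and for which $\E{\hat g_t^T\nabla V_k(x_t)\,|\,\ccalG_t}>0$ off the critical set and $=0$ on it. Non-collision is immediate from Lemma \ref{lemma_non_collision}: \eqref{eqn_log_stoch_estimate} is bounded (practical estimates are bounded), and near $\ccalO_i$ one has $\beta(x)\approx 0$, so $\hat g\approx-\hat\nabla\beta/k$, which is anti-aligned with $\nabla\beta_i$, giving the ``points-outward'' property; hence $\{x_t,t\ge0\}\subset\ccalF$ for $\varepsilon_0<\min_i\gamma_i/B$. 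Convergence to the critical set of $V_k$ then follows from the supermartingale argument of Lemma \ref{lemma_convergence_to_critical_points}, the exclusion of saddle limits from Lemma \ref{lemma_non_convergence_to_saddle} with $V(x)=V_k(x)$, and together they yield $x_t\to\bar x$ a.s. with $\bar x$ the minimum of $\phi_k$, which is within $\delta$ of $x^*$.

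The main obstacle is that, unlike $\varphi_k$, the barrier $\phi_k$ (and therefore $V_k$) is \emph{not} bounded on $\ccalF$: it diverges at $\partial\ccalF$ and $\nabla^2\phi_k$ blows up there like $\|\nabla\beta_i\|^2/(k\beta_i^2)$. This breaks two steps of Lemma \ref{lemma_convergence_to_critical_points} as written, namely the integrability of the supermartingale $S_t$ and the uniform Hessian bound $L$ used to control the Taylor remainder. I see two routes to repair it, and I would pursue whichever is cleaner. The first is to confine the iterates to a compact subset of $\mathrm{int}(\ccalF)$: the ``points-outward'' property forces $\beta_i(x_{t+1})\ge\beta_i(x_t)-C\varepsilon_t^2$ whenever $x_t$ lies in the collar $\{d_i(x)<\gamma_i\}$, while outside that collar $\beta_i$ is bounded below by a positive constant and a single step moves the state by at most $B\varepsilon_0<\gamma_i$; combined with $\sum_t\varepsilon_t^2<\infty$ this shows the trajectory never enters a shrinking neighborhood of $\partial\ccalF$ and in fact stays in a compact $\ccalK\subset\mathrm{int}(\ccalF)$ depending only on $x_0$ and $\{\varepsilon_t\}$, on which $V_k$ and $\nabla^2V_k$ are bounded and the supermartingale argument goes through verbatim. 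The second route is to replace $\phi_k$ by a bounded, strictly increasing $C^2$ reparametrization $h\circ\phi_k$ with $h'$ decaying fast enough that $h'(\phi_k)\nabla^2\phi_k$ stays bounded; this preserves the critical points and their indices, and since $\nabla(h\circ\phi_k)$ is a positive scalar multiple of $\nabla\phi_k$ the descent inequality for $\hat g_t$ is unchanged. Once confinement (or the bounded reparametrization) is in place, the remainder of the proof is a routine transcription of the unbiased and biased arguments above, and the conclusion $x_t\to\bar x$ a.s. with $\|\bar x-x^*\|$ as small as desired for $k$ large follows.
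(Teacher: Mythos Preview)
Your proposal follows the same architecture as the paper's proof: non-collision via Lemma \ref{lemma_non_collision}, the critical-point structure of $\phi_k$ established by transcribing Lemmas 2--6 of \cite{PaternainEtal15}, the self-indexing energy function via Lemma \ref{lemma_energy_function}, and then Lemmas \ref{lemma_convergence_to_critical_points} and \ref{lemma_non_convergence_to_saddle} applied to $V_k$. The paper's proof is a one-paragraph invocation of exactly these pieces.

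The unboundedness concern you raise, and the two repairs you propose, are not needed; the parenthetical ``(and therefore $V_k$)'' is in fact mistaken. The supermartingale of Lemma \ref{lemma_convergence_to_critical_points} is built from $V_k$, not from $\phi_k$, and the self-indexing energy function furnished by Theorem \ref{theo_energy_function_smale} takes values in $[0,n]$ by construction, irrespective of whether $\phi_k$ blows up at $\partial\ccalF$. Moreover the vector field whose energy function is sought is not $\nabla\phi_k$ but the $\beta$-scaled direction $\beta(x)\nabla f_0(x)-\nabla\beta(x)/k$ (plus bias), which is smooth on the \emph{compact} manifold-with-boundary $\ccalF$ and points inward on $\partial\ccalF$; hence $V_k\in C^2(\ccalF)$ with $\ccalF$ compact, so both $V_k$ and $\nabla^2 V_k$ are bounded and Lemma \ref{lemma_convergence_to_critical_points} applies verbatim with $V_k$ in place of $\varphi_k$. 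The paper relies on this implicitly and simply says ``Lemma \ref{lemma_convergence_to_critical_points} holds for the self indexing energy function'' without further comment; your confinement and reparametrization arguments would work but are superfluous.
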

\begin{proof}
  Observe that non collision is ensured by virtue of Lemma \ref{lemma_non_collision}. The fact that the critical points of $\phi_k(x)$ are non degenerate and that only one of them is a minimum and it can be pushed arbitrarily close to the minimum of $f_0(x)$ can be shown in the same way as Lemmas 2-6 in \cite{PaternainEtal15}. Hence by virtue of Lemma \ref{lemma_energy_function} there exists an energy function such that its critical points are arbitrarily close to those of $\phi_k(x)$ and the indexes of said critical points are the same for both functions. Thus Lemma \ref{lemma_convergence_to_critical_points} holds for the self indexing energy function. Moreover since all the critical points but one are non degenerate saddles for large enough $k$ and by virtue of  Lemma \ref{lemma_non_convergence_to_saddle} the theorem is proved.
\end{proof}
The previous results extends the result for the biased estimate of the Rimon-Koditschek navigation function to a new class of artificial potentials under the same conditions over the geometry of the free space and the bias. In the next section we study the results of Theorems \ref{theo_biased} and \ref{theo_log} numerically. 
%
\section{Numerical Examples}\label{sec_numerical_example}
We evaluate the performance of the local stochastic approximation of the gradient of the navigation function given in \eqref{eqn_estimate_navigation_direction} in two different scenarios for which the condition \eqref{eqn_condition_general} is satisfied. In particular, the estimations of the obstacles are done by considering osculating circles at the closest point of the obstacle to the agent as in Section \ref{sec_circles}. In Section \ref{sec_elliptical_obstacles} the free space is such that the obstacles are ellipsoids and in section \ref{sec_egg_shaped} these are egg shaped. In both cases the external boundary of the free space is a spherical shell of center $c_0$ and radius $r_0$.
\subsection{Elliptical obstacles}\label{sec_elliptical_obstacles}
\begin{figure*}
        \centering
        \begin{subfigure}[b]{0.49\linewidth}
                \includegraphics[width=\linewidth, height=0.62\linewidth]{./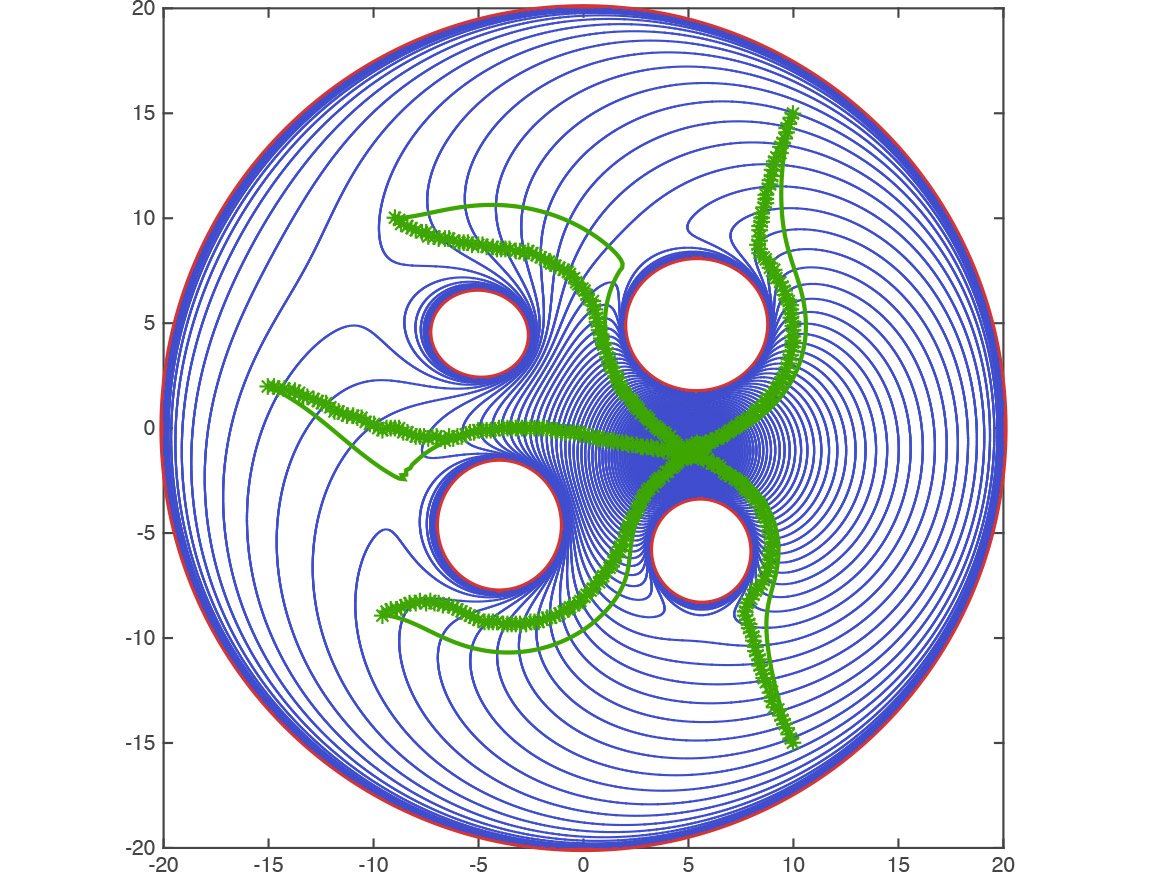}
                \caption{Trajectories resulting of the navigation function approach -- solid line-- and its stochastic approximation given in \eqref{eqn_gradient_descent} --stars-- for $k=7$.}
                \label{fig_k_7}
        \end{subfigure}
        ~
        \begin{subfigure}[b]{0.49\linewidth}
                \includegraphics[width=0.8\linewidth, height=0.62\linewidth]{./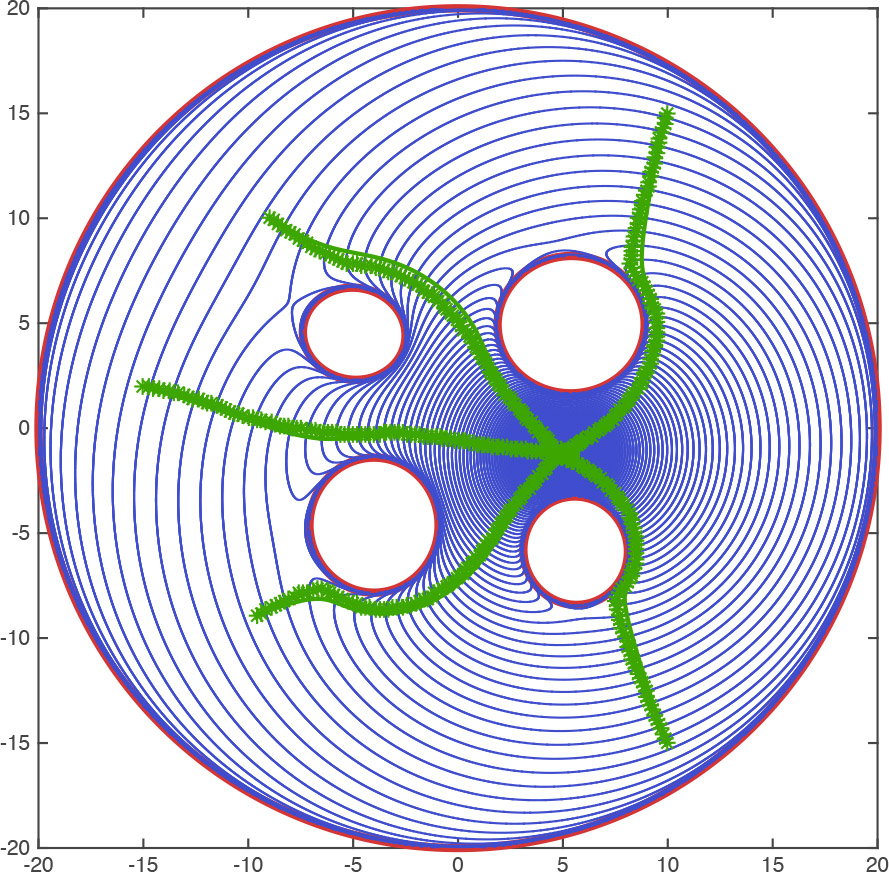}
	\caption{Trajectories resulting of the navigation function approach -- solid line-- and its stochastic approximation given in \eqref{eqn_gradient_descent}--stars-- for $k=12$.}
                \label{fig_k_12}
        \end{subfigure}
        ~ \caption{The trajectories resulting from the update \eqref{eqn_gradient_descent} succeed in driving the agent to the goal configuration for five different initial positions as expected in virtue of Theorem \ref{theo_biased}. We observe that the larger the order parameter $k$ is, the closer the trajectory resulting from stochastic approximation is to the trajectory resulting of descending along the gradient of the navigation function \eqref{eqn_navigation_function}.}\label{fig_ellipses}
\end{figure*}
%

\begin{figure*}
        \centering
        \begin{subfigure}[b]{0.49\linewidth}
                \includegraphics[width=\linewidth, height=0.62\linewidth]{./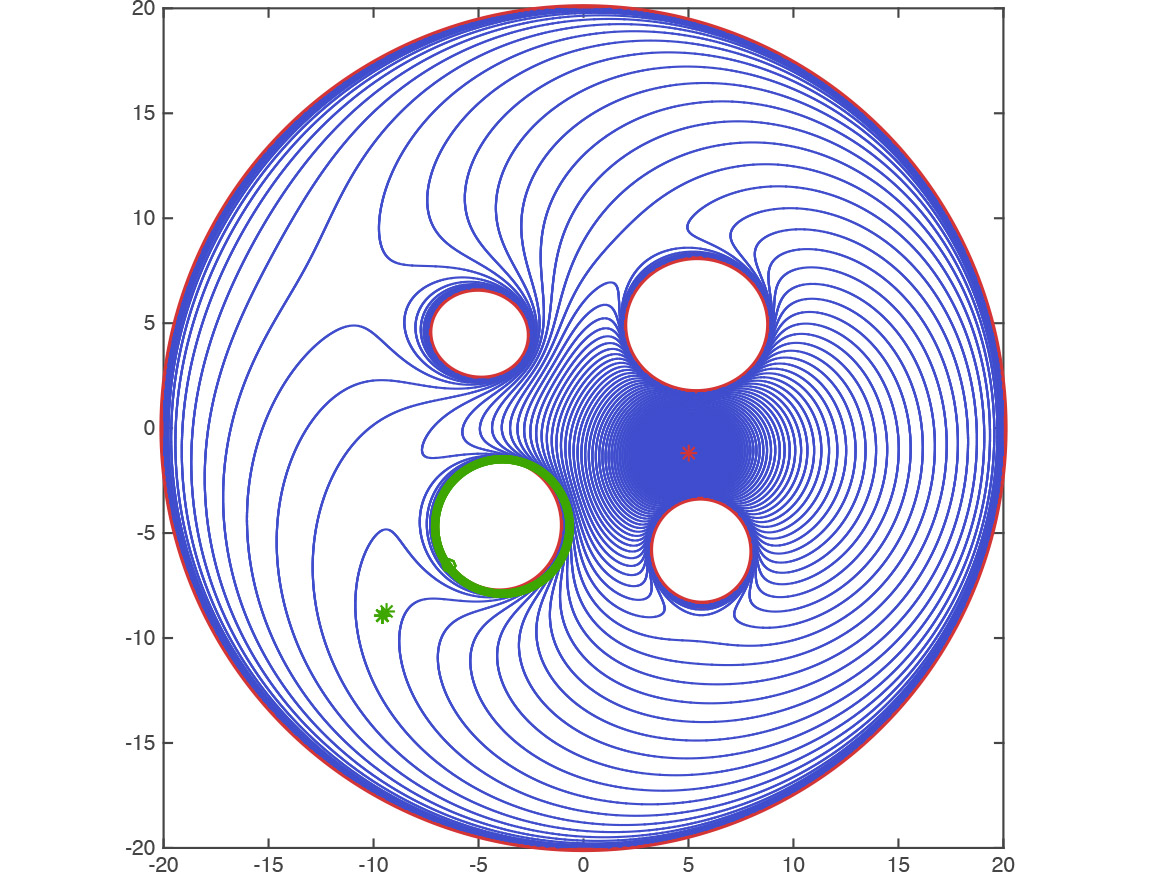}
                \caption{Local estimation of the obstacle with perfect measures.}
                \label{fig_no_noise}
        \end{subfigure}
        ~
        \begin{subfigure}[b]{0.49\linewidth}
                \includegraphics[width=\linewidth, height=0.62\linewidth]{./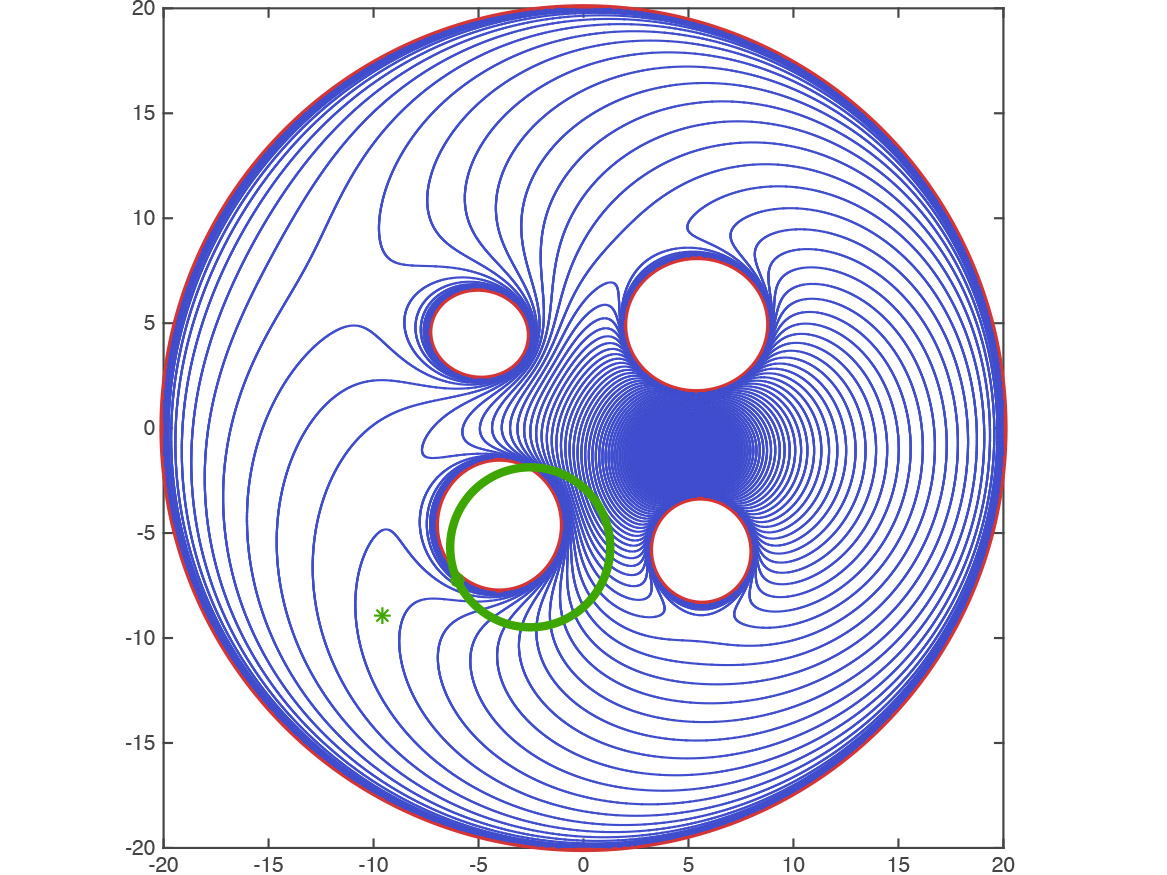}
                \caption{Stochastic estimation of the obstacle with noisy measurements. }
                \label{fig_noise}
        \end{subfigure}
        ~ \caption{Estimation of the obstacles by the hallucinated osculating circle for a particular position in the free space with exact and stochastic information. Obstacles are sensed if $d_i(x)<7$. Noise is Gaussian, additive, mean zero and with variance $\sigma_{d_i}=\sigma_{R_i}=\sigma_{\bbn_i}=d_i(x)/10$.  }\label{fig_obstacle_estimation}
\end{figure*}
%
%
In this section we consider $m$ elliptical obstacles in $\mathbb{R}^2$. For $i=1\ldots m$, let $A_i \in \ccalM^{2\times 2}$ be symmetric  and positive definite matrices, and let $\mu_{\min}^i>0$ be the minimum eigenvalue of matrix $A_i$. We describe the obstacles in a functional form through the following functions 
\begin{equation}
\beta_i(x) = (x-c_i)^T A_i (x-c_i) - \mu_{\min}^i r_i^2.
\end{equation}
where $c_i \in \ccalX$ is the center of the $i$-th ellipse and $r_i>0$ is the length of its largest axis. With this selection of $\beta_i(x)$ the $i$-th obstacle is defined as 
\begin{equation}
\ccalO_i = \left\{x \in \ccalX \big|  \beta_i(x) < 0\right\}.
\end{equation}
In these experiments we place the center of each ellipsoid in a different orthant. In particular, each center is set to be in the position $L(\pm1,\pm1)$ and then we add a random variation drawn uniformly from $[-\Delta,\Delta]^2$, where $0 < \Delta < L$. The maximum axis of the ellipse -- $r_i $-- is drawn uniformly from $[r_0/10,r_0/5]$ and the matrices $A_i$ for $i = 1 . . . m$ are such that they are orthogonal and their eigenvalues are random and uniformly selected from the interval $[1, 2]$. We verify that the obstacles resulting of the previous process do not intersect. If they do, we re draw all previous parameters. For the objective function we consider a quadratic cost given by $f_0(x)=(x-x^*)^T Q(x-x^*)$, where $x^*$ is drawn uniformly over $[-r_0 /2, r_0 /2]^2$ and we verify that it is in the free space. The matrix $Q \in \ccalM^{2\times 2}$ is a random positive definite symmetric matrix whose eigenvalues are selected as follows. For each obstacle we compute the maximum condition number that $Q$ could have in order to satisfy condition \eqref{eqn_condition_general}. Let $N_{cond}$ be the maximum among these admissible condition numbers. Then, the eigenvalues of $Q$ are selected randomly from $[1, N_{cond} + 1]$, hence ensuring that \eqref{eqn_condition_general} is satisfied. For the estimates of the objective function, its gradient, the distance to the obstacles, the normal direction to them and their curvature we consider independent gaussian additive noise with mean zero and standard deviation $\sigma_{q}$.  The step size selected for the update \eqref{eqn_gradient_descent} is of the form $\varepsilon_t = \varepsilon_0/(1+\zeta t)$ and the initial position is selected randomly over $[-r_0,r_0]^2$. 

For this experiment we set the parameters to be $c_0=0$, $r_0 = 20$, $L=6$, $\Delta = 1$, $\sigma_{f_0} = \sigma_{\nabla f_0} =1$ and $\sigma_{d_i} = \sigma_{R_i} = \sigma_{\bbn_i} = d_i(x)/10$. The selection of a variance that depends on the the distance is done so to ensure that the closer the agent is to the boundary of the free space the better the estimation of the obstacle is. In particular, at the boundary we have that $\sigma_{d_i} = \sigma_{R_i} = \sigma_{\bbn_i}=0$. We set the constant at which the agent is able to measure an obstacle [c.f. \eqref{eqn_awareness_set}] to be $c=7$. Finally, the parameters of the step size are $\varepsilon_0 = 5\times 10^{-2}$ and $\zeta =5\times 10^{-3}$ and we run each simulation $100$ steps. 

In Figure \ref{fig_ellipses} we observe the behavior of the system that follows the local and stochastic update \eqref{eqn_gradient_descent} -- marked with stars -- and that of the system following the gradient dynamical system $\dot{x}=-\nabla \varphi_k(x)$-- solid lines -- for five different initial conditions. In Figure \ref{fig_k_7} the order parameter is set to be $k=7$ while in \ref{fig_k_12} it is set to be $12$. In both cases it can be observed that the local and stochastic update succeeds in generating a sequence that remains in the free space and that converges to the minimum of the objective function. It is also observed that the direction in which the agent moves while following the local update differs from that of the agent following the gradient of the navigation function. This result is not surprising in virtue of the fact that as discussed in Section \ref{sec_circles} the model selected results in a biased estimate of the gradient of the navigation function. 

However notice that by increasing $k$ the two trajectories become closer to each other. This effect can be observed by comparing the trajectories depicted in figures \ref{fig_k_7} and \ref{fig_k_12} where the order parameter $k$ is set to be $7$ and  $12$ respectively. This result is expected because as discussed in Section \ref{sec_circles} the bias is such that its norm is decreasing with $k$. In particular by selecting $k$ large enough the bias could be reduced arbitrarily. Notice that when the order parameter $k$ is increased the sequence resulting from the stochastic approximation is not modified as much as the trajectory that considers complete information about the free space. This is because the larger the value of $k$ the smaller is the effect of obstacles that are far from the agent as compared to the gradient of the objective function (c.f. \eqref{eqn_navigation_function}). Thus in a sense higher value of $k$ resembles to considering only nearby obstacles as in the case of the stochastic approximation. 

The effect of the standard deviations of the noise in the estimation of the obstacles with which the simulations were done is illustrated in Figure \ref{fig_obstacle_estimation} by the green circles depicted. In particular, for the initial position of one of the trajectories depicted in Figure \ref{fig_k_7} we observe the estimation of the closest obstacle to that position in the noiseless case \ref{fig_no_noise} and the estimate with noise \ref{fig_noise}.
%
%
\subsection{Egg shaped world obstacles}\label{sec_egg_shaped}
In this section we consider egg shaped obstacles as an example of convex obstacles different than ellipses. We draw the center of the each obstacle, $c_i$, from a uniform distribution over $[-L/2, L/2] \times [-L/2, L/2]$. The distance between the ''tip'' and the ''bottom'' of the egg, $r_i$, is drawn uniformly over $[r_0/10; r_0/5]$ and with equal probability the egg is horizontal or vertical. The obstacle being horizontal translates into the fact that the function $\beta_i(x)$ representing the obstacle takes the following form
\begin{equation}\label{eqn_egg_hor}
\beta_i(x) = \|x-c_i \|^4 - 2r_i \left(  x^{(1)} - c_i^{(1)}\right)^3,
\end{equation}
where the superscript $(1)$ refers to first component of a vector. Likewise, for vertical eggs the function $\beta_i(x)$ takes the form
\begin{equation}\label{eqn_egg_ver}
\beta_i(x) = \|x-c_i \|^4 - 2r_i \left(  x^{(2)} - x_c^{(2)}\right)^3.
\end{equation}
Notice that the functions $\beta_i$ as defined above are not convex on $\mathbb{R}^2$, however since their Hessians are positive definite outside the obstacles it is possible to define a convex extension of them inside the obstacles. This is not needed because the agent operates in the free space and therefore there is no difference to him between the functions defined in \eqref{eqn_egg_hor} and \eqref{eqn_egg_ver} and their convex extensions. In particular, for this experiment we set $r_0=20$ and $L=6$ The selection of the noises standard deviations $\sigma_q$ and the distance at which the obstacles can be measured are the same as in Section \ref{sec_elliptical_obstacles}.

In Figure \ref{fig_egg} we observe the level sets of the navigation function \eqref{eqn_navigation_function} and the trajectories resulting from the stochastic approximation \eqref{eqn_gradient_descent} --marked with stars-- and from descending along the direction of the negative gradient of the navigation function for $k=15$. It can be observed that the update \eqref{eqn_gradient_descent} succeeds in driving the agent to the goal configuration given by the minimum of the objective function $f_0(x)$ while remaining in the free space at all times. 
%
\begin{figure}
\centering
\includegraphics[width=\linewidth, height=0.62\linewidth]{./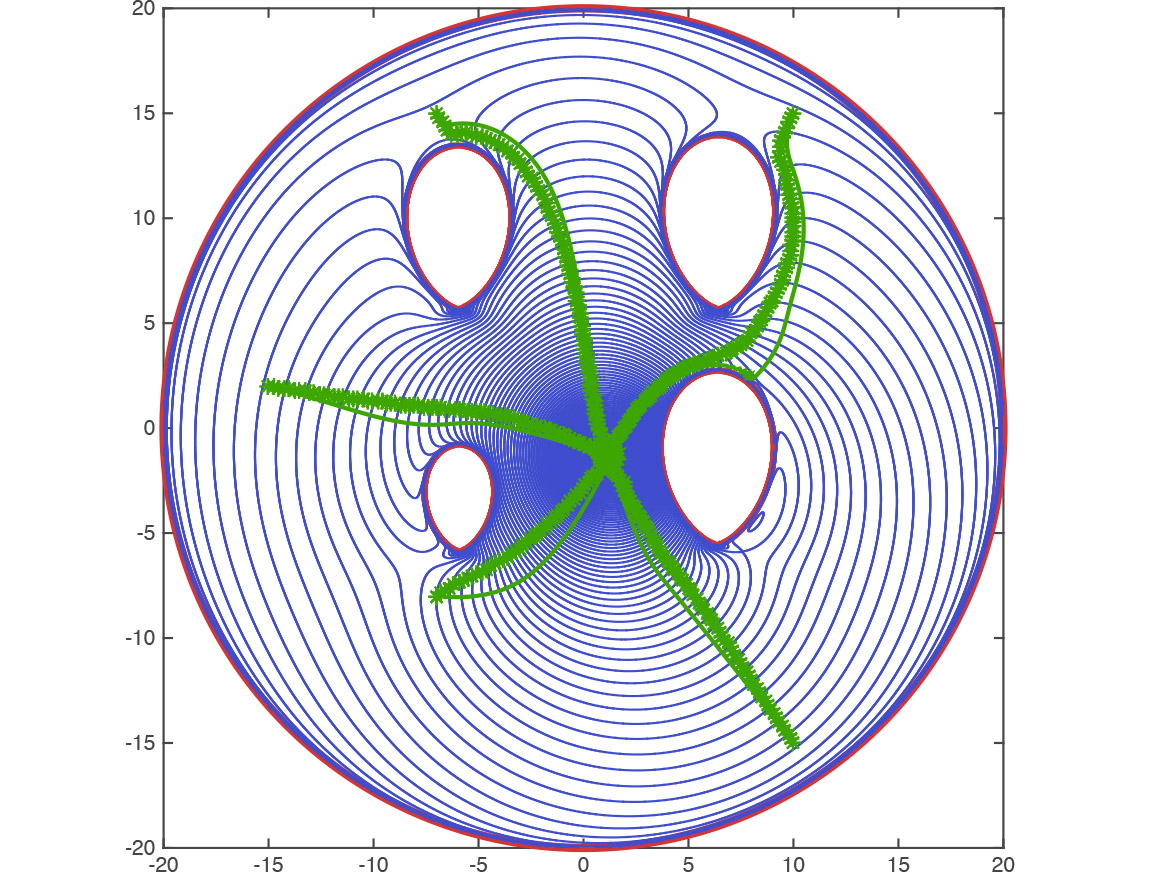}
\caption{Trajectories resulting of the navigation function approach -- solid line-- and its stochastic approximation given in \eqref{eqn_gradient_descent} for $k=15$ in an egg shaped world. The trajectories resulting from the update \eqref{eqn_gradient_descent} succeed in driving the agent to the goal configuration for five different initial positions as expected in virtue of Theorem \ref{theo_biased}. }\label{fig_egg}
\end{figure}
\subsection{Logarithmic barrier}
In this section we evaluate the performance of the descent along the direction of the negative gradient of the logarithmic barrier artificial potential in \eqref{eqn_log_stoch_estimate}. For this experiments the obstacles and the boundary of the workspace are selected as in Section \ref{sec_elliptical_obstacles} and the parameters selected are set to $c_0=0$, $r_0 = 20$, $L=6$, $\Delta = 1$, $\sigma_{f_0} = \sigma_{\nabla f_0} =1$, $\sigma_{d_i} = \sigma_{R_i} = \sigma_{\bbn_i} = d_i(x)/10$ and $k=10$. In Figure \ref{fig_log} we depict the trajectory of an agent starting at different initial positions. As it can be observed the agent succeeds in reaching the minimum of the objective function $f_0(x)$ while avoiding the obstacles. By comparing these trajectories to those in figures \ref{fig_k_7} and \ref{fig_k_7} --which were generated by following the gradient of the Rimon-Koditschek artificial potential-- we observe that the logarithmic barrier artificial potential results in paths that pass closer to the obstacles.  
\begin{figure}
  \centering
\includegraphics[width=0.85\linewidth]{./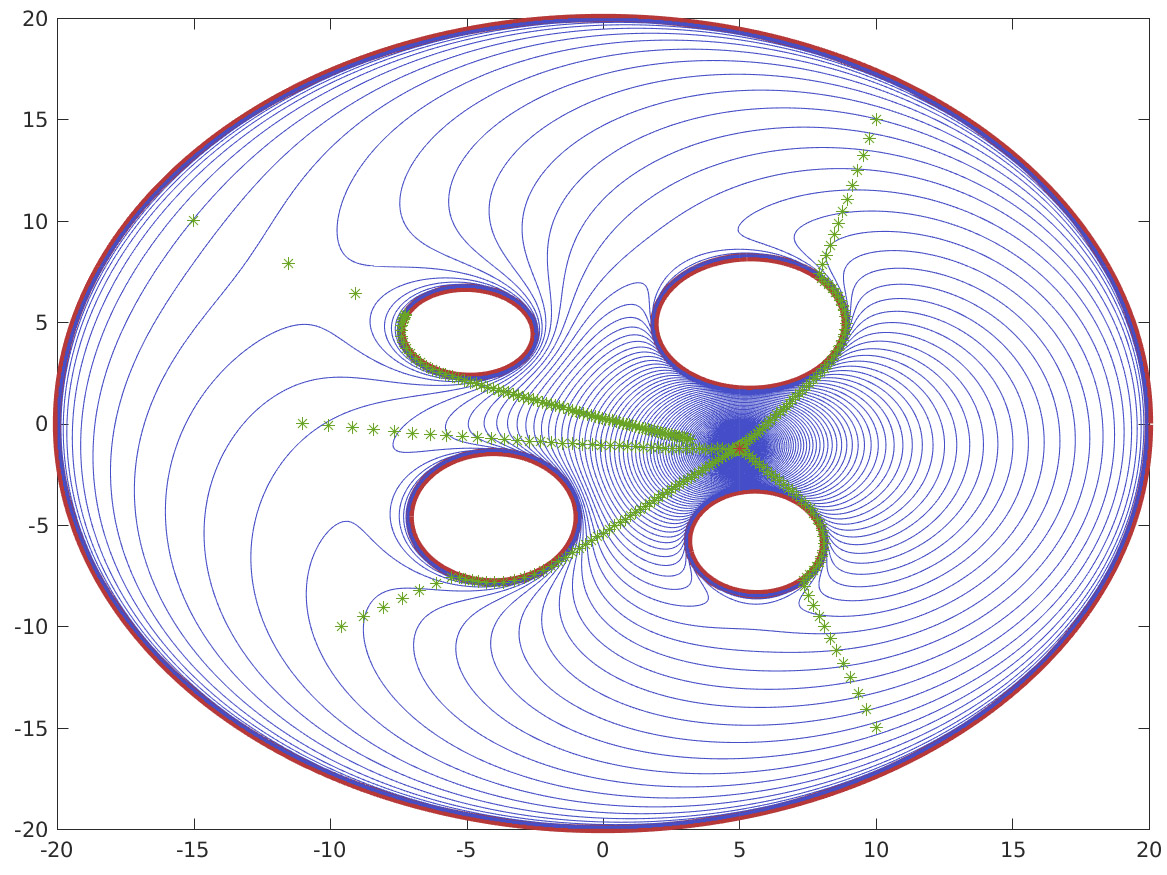}
\caption{Trajectories resulting of following the negative gradient of the logarithmic barrier given in \eqref{eqn_log_stoch} for $k=10$ in an elliptical world. The trajectories resulting from the update \eqref{eqn_gradient_descent} succeed in driving the agent to the goal configuration for five different initial positions as expected in virtue of Theorem \ref{theo_log}. }\label{fig_log}
\end{figure}

%
\section{Conclusions}\label{sec_conclusion}
We considered a set with convex holes in which an agent must navigate to the minimum of a convex function. The objective function and the obstacles are unknown a priori to the agent and sensorial information about these is available to him. In particular, this information is local and stochastic. We showed that an agent that is capable of constructing an unbiased estimate of the gradient of an artificial potential of the Rimon-Koditschek form is capable of navigating towards the minimum of this objective function while avoiding the obstacles with probability one under the same geometric restrictions than in the deterministic case. Furthermore, for biased estimates we show that if near the saddle points of the navigation function the bias is not too large the same holds true. Numerical experiments support the theoretical results.
%
\appendix
\section{Appendix}

\subsection{Proof of Lemma \ref{lemma_non_convergence_to_saddle}}\label{ap_non_convergence_to_saddle}
Let us add and subtract $\varepsilon_t\E{\hat{g}_t(x_t,\theta_t) \Big| \ccalG_t}$ to \eqref{eqn_gradient_descent} 
\begin{equation}
\begin{split}
x_{t+1} &= x_t -\varepsilon_t\left(\hat{g}_t(x_t,\theta_t)-\E{\hat{g}_t(x_t,\theta_t) \Big| \ccalG_t}\right) \\
&-\varepsilon_t\E{\hat{g}_t(x_t,\theta_t) \Big| \ccalG_t}.
\end{split}
\end{equation} 
Since $\hat{g}_t(x_t,\theta)$ is an unbiased estimator of the gradient of the function $V(x)$ we can think of the expression $\left(\hat{g}_t(x_t,\theta_t)-\E{\hat{g}_t(x_t,\theta_t) \Big| \ccalG_t}\right)$ as an error $e_t$ between the stochastic gradient and the gradient of the function $V(x)$. With this definition the above equation can be written as
\begin{equation}\label{eqn_equation_aux_1_lemma_saddle}
x_{t+1} = x_t -\varepsilon_t\nabla V(x_t) -\varepsilon_te_t, 
\end{equation}
where $e_t$ is a random vector whose expected value is zero and it is bounded with probability one because $\hat{g}_t(x_t,\theta_t)$ is bounded with probability one. Let $x_c$ be a saddle point of the energy function $V(x)$ and let $H$ denote the Hessian of $V(x)$ evaluated at $x_c$, i.e., $H= \nabla^2 V(x_c)$. Then, we have that $\nabla V(x_t) = H(x_t-x_c) + o(\|x_t-x_c\|^2)$. Replacing this expression for the gradient of $\nabla V(x_t)$ in \eqref{eqn_equation_aux_1_lemma_saddle} yields,
\begin{equation}\label{eqn_key_in_saddle_point_lemma}
x_{t+1} -x_c= (I-\varepsilon_tH)(x_t -x_c) +\varepsilon_t\left(o(\|x_t-x_c\|^2) -e_t\right).
\end{equation}
Recursively it is possible to write the difference $x_{t+1} - x_c$ as
\begin{equation}
\begin{split}
x_{t+1} - x_c &= \prod_{s=0}^t (I-\varepsilon_sH) (x_0-x_c)\\
&+\sum_{s=0}^t\varepsilon_s\left(\prod_{u=s}^{t-1} I -\varepsilon_u  H\right)\left( o(\|x_s-x_{c}\|^2) -e_s\right).
\end{split}
\end{equation}
Let $v_i$ be the eigenvector corresponding to the eigenvalue $\lambda_i$ of the Hessian, then we can write the projection over $v_i$ of the above equation as
\begin{equation}
\begin{split}
(x_{t+1} - x_c)_{i} &= \prod_{s=0}^t (1-\varepsilon_s\lambda_i) (x_0-x_c)_{i}\\
&+\sum_{s=0}^t\varepsilon_s\left( o(\|x_s-x_{c}\|^2) -e_s\right)_{i}\prod_{u=s}^{t-1} \left(1 -\varepsilon_u  \lambda_i\right).
\end{split}
\end{equation}
Taking $\prod_{s=0}^{t-1} (I-\varepsilon_s\lambda_i) $ as a common factor we can write the above equation as
\begin{equation}\label{eqn_key_equation}
\begin{split}
(x_{t+1} - x_c)_{i} = \prod_{s=0}^{t-1} (1-\varepsilon_s\lambda_i) \left[(1-\varepsilon_t\lambda_i)(x_0-x_c)_{i}+ \phantom{\sum_{s=0}^t}\right.\\
\left.\sum_{s=0}^t\varepsilon_s\left(\prod_{u=0}^{s-1} (1 -\varepsilon_u  \lambda_i)^{-1}\right)\left( o(\|x_s-x_{c}\|^2) -e_s\right)_{i}\right].
\end{split}
\end{equation}
Let us assume that the sequence resulting from the update given in \eqref{eqn_gradient_descent} converges to a saddle point with strictly positive probability. Therefore, there is a subset of $\Omega$ for which for any $\delta>0$, there exists a time $T$ such that the absolute value of the sequence $x_{t+1}-x_c$ is smaller than $\delta$ for any $t>T$. Without loss of generality let $T=0$. This implies that for every $s\geq0$ we have that $o(\|x_s-x_c\|^2)$ is uniformly bounded. Next, we will show that the series $\sum_{s=0}^t\varepsilon_s\left(\prod_{u=0}^{s-1} (1 -\varepsilon_u  \lambda_i)^{-1}\right)$ converges. Let us start by writing $\prod_{u=0}^{s-1} (1 -\varepsilon_u  \lambda_i)^{-1}$ as
\begin{equation}
\prod_{u=0}^{s-1} (1 -\varepsilon_u  \lambda_i)^{-1} = \prod_{u=0}^{s-1} \frac{1+\zeta  u}{1-\varepsilon_0\lambda_i + \zeta u}
\end{equation}
Divide both numerator and denominator by $\zeta$ and write the quotient of products as the following quotient of gamma functions 
\begin{equation}\label{eqn_aux_product}
\prod_{u=0}^{s-1} (1 -\varepsilon_u  \lambda_i)^{-1} = \frac{\Gamma(1/\zeta+s)}{\Gamma((1-\varepsilon_0\lambda_i)/\zeta+s)}\frac{\Gamma((1-\varepsilon_0\lambda_i)/\zeta)}{\Gamma(1/\zeta)}.
\end{equation}
Let $s$ tend to infinity and write the limit of the gamma function evaluated in $c+s$ for any $c$ as
\begin{equation}
\lim_{s\to \infty} \Gamma(c+s) = \lim_{s\to\infty} \Gamma(s)s^c. 
\end{equation}
Therefore the limit of the expression \eqref{eqn_aux_product} for $s$ tending to infinity can be computed using the asymptotic behavior of the gamma function from the above equation. This limit yields
\begin{equation}
\lim_{s\to\infty}\prod_{u=0}^{s-1} (1 -\varepsilon_u  \lambda_i)^{-1} =\frac{\Gamma((1-\varepsilon_0\lambda_i)/\zeta)}{\Gamma(1/\zeta)}s^{\varepsilon_0\lambda_i/\zeta}
\end{equation}
Since the index of the critical point $x_c$ is $n-1$, we have $n-1$ eigenvalues that are strictly negative. For any of these we have that the asymptotical behavior of $\varepsilon_s\left(\prod_{u=0}^{s-1} (1 -\varepsilon_u  \lambda_i)^{-1}\right) $ is $o(s^{-q})$, with $q>1$ and therefore
\begin{equation}
\sum_{s=0}^\infty\varepsilon_s\left(\prod_{u=0}^{s-1} (1 -\varepsilon_u  \lambda_i)^{-1}\right) < \infty.
\end{equation}
This implies in turn that \eqref{eqn_key_equation} can be written as 
\begin{equation}\label{eqn_limit_saddle}
\lim_{t\to \infty} (x_{t+1} - x_c)_{i} = \lim_{t\to\infty} \prod_{s=0}^t (1-\varepsilon_s\lambda_i) \left[(x_0-x_c)_{i} +C\right],
\end{equation}
where $C$ is given as
\begin{equation}
C=\sum_{s=0}^\infty\varepsilon_s\left(\prod_{u=0}^{s-1} (1 -\varepsilon_u  \lambda_i)^{-1}\right)\left( o(\|x_s-x_{c}\|^2) -e_s\right)_{i}.
\end{equation}
Without loss of generality we can assume that $(x_0-x_c)_{i}$ it is not zero, because in finite time with probability one any component of the update will be different than zero. In the subset of the probability space for which $\lim_{t\to \infty} x_t(\omega) = x_c$, the left hand side of \eqref{eqn_limit_saddle} is equal to zero. However, the right hand side of \eqref{eqn_limit_saddle} diverges since $\lambda_i<0$ which is a contradiction. In fact, in order to ensure divergence of the right hand side of \eqref{eqn_limit_saddle} we need to show that $C = (x_0 -x_c)_i$ only in a set of zero measure. Since we are assuming that $\lim_{t\to\infty} x_t =x_c$ the approximation errors $o(\| x_s- x_c\|)$ are arbitrarily small. Thus, in order to have $C = (x_0-x_c)_i$ it must be the case that the sum of independent random variables $(e_s)_i$ weighted by its corresponding coefficients is equal to $(x_0-x_c)_i$. Which cannot hold since these are independent of the initial position. Thus, the set for which $\lim_{t\to \infty} x_t(\omega) = x_c$ has measure zero. Thus completing the proof of the Lemma.


\subsection{Proof of Lemma \ref{lemma_energy_function}}\label{ap_energy_function}
To develop the proof of Lemma \ref{lemma_energy_function} we need the definition of a gradient like vector field and a theorem by Smale that states that any gradient like vector field on a manifold has a self indexing energy function \cite{10.2307/1970311}. We formalize this result next after providing the definition of a gradient like vector field.
\begin{definition}[\bf Gradient like vector field]\label{def_gradient_like_field}
Let $x \in \mathbb{R}^n$ and let $g : \mathbb{R}^n \to \mathbb{R}^n$ be a smooth function, we say that $g(x)$ is a gradient like vector field if its non wandering set consists of finitely many hyperbolic equilibrium states and the stable and unstable manifolds of singular points intersect transversally. 
\end{definition}
Instead of presenting the original version of Smale's Theorem in \cite{10.2307/1970311} we provide a more recent version of it that can be found in \cite{grines2014energy}.
\begin{theorem}\label{theo_energy_function_smale}
Let $M^n$ be a smooth closed orientable manifold and let $g(x): M^n \to [0,n]$ be a gradient-like vector field, then, there exists a function $V: M^n \to \mathbb{R}$ such that 
\begin{description}
\item[(i)] is twice differentiable and all of its critical points are nondegenerate,
\item[(ii)] its critical points coincide with the set of the critical points of $g(x)$
\item[(iii)] $\dot{V}(x) = \nabla V(x)^T g(x) < 0$, for any $x$ such that $g(x) \neq 0$
\item[(iv)] V(x) = ind(x) for $x$ such that $ g(x) =0$.
\end{description}
\end{theorem}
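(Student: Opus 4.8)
The plan is to prove this as a theorem in Morse theory, in two stages: first produce \emph{some} smooth energy function compatible with $g$, and then rearrange its critical values so that the function becomes self-indexing. I would begin by recording the local structure of $g$ near each of its finitely many hyperbolic equilibria. By the stable/unstable manifold theorem there are local coordinates in which $W^s(p)$ and $W^u(p)$ are coordinate subspaces of dimensions $n-\mathrm{ind}(p)$ and $\mathrm{ind}(p)$; on such a chart the quadratic $q_p(y)=-\sum_{i\le \mathrm{ind}(p)} y_i^2 + \sum_{i>\mathrm{ind}(p)} y_i^2$ is a local Morse function of index $\mathrm{ind}(p)$ that strictly decreases along the flow of $g$ as one moves away from $p$.

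I would then globalize these local models into a single smooth Lyapunov function $f:M^n\to\mathbb R$ with $\nabla f(x)^T g(x)<0$ whenever $g(x)\neq 0$, with critical set exactly the equilibria of $g$, and with $\mathrm{ind}_f(p)=\mathrm{ind}(p)$. The construction exploits that $g$ is gradient-like: because the non-wandering set is finite and hyperbolic, every trajectory runs from one equilibrium to another, so there is no recurrence to obstruct patching. Concretely, I would cover $M$ by the stable-manifold charts together with flow boxes along trajectories, take a subordinate partition of unity, and splice the local decreasing functions; compactness of $M$ guarantees finitely many pieces and that $f$ is a genuine Morse function. This already yields (i)--(iii), except that $f$ need not satisfy the self-indexing condition (iv).

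The heart of the argument is the rearrangement step. The key structural fact is that, since $g$ is Morse--Smale, a trajectory from $p$ to $q$ can exist only if $\mathrm{ind}(p)>\mathrm{ind}(q)$: transversality forces $\dim\big(W^u(p)\cap W^s(q)\big)=\mathrm{ind}(p)+\big(n-\mathrm{ind}(q)\big)-n=\mathrm{ind}(p)-\mathrm{ind}(q)$, and a nonempty flow-invariant intersection requires this to be at least one. Hence the relation ``$p$ is joined to $q$ by a trajectory'' refines the order by index. I would then invoke Milnor's Rearrangement Theorem: whenever two critical points are \emph{not} joined by a trajectory, their relative critical values may be interchanged by modifying $f$ only near the relevant descending/ascending sets, leaving the field and the critical points unchanged. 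Applying this repeatedly, I can push all index-$0$ critical points to the lowest level, index-$1$ to the next, and so on, obtaining a function whose critical values increase strictly with Morse index. Finally, composing with an increasing diffeomorphism of $\mathbb R$ that sends the finitely many index-$j$ levels to the value $j$ produces $V$ with $V(p)=\mathrm{ind}(p)$, giving (iv) while preserving (i)--(iii).

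The main obstacle I anticipate is precisely this self-indexing step: one must check that critical points sharing the same index can be brought to a common level without ever forcing two trajectory-connected points to swap order, and that each modification of $f$ is localized enough to create no new critical points and to preserve nondegeneracy. This is exactly where the dimension count above does the work, since it certifies that same-index points are never trajectory-connected and the Rearrangement Theorem therefore applies to them freely. By comparison, the globalization in the first stage is routine given compactness and the absence of recurrence, and the final normalization is a one-dimensional reparametrization.
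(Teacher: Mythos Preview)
The paper does not prove this theorem at all: its entire proof reads ``See Theorem B in \cite{10.2307/1970311}.'' So there is no argument in the paper to compare against; the result is quoted from Smale's original work (as restated in \cite{grines2014energy}), and the paper only uses it as a black box inside the proof of Lemma~\ref{lemma_energy_function}.

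Your outline is the classical Milnor--Smale route (local Morse models, a global Lyapunov function, then rearrangement to self-indexing), and the rearrangement half is essentially correct: the Morse--Smale dimension count $\dim(W^u(p)\cap W^s(q))=\mathrm{ind}(p)-\mathrm{ind}(q)$ is exactly what guarantees that same-index critical points are never connected, so Milnor's rearrangement applies freely and the final monotone reparametrization gives (iv). The place where your sketch is genuinely incomplete is the globalization step. Writing $f=\sum_i \rho_i q_i$ with a partition of unity does \emph{not} in general produce a function that is strictly decreasing along the flow, because the derivative along $g$ picks up the cross terms $\sum_i (\nabla\rho_i^{\,T} g)\,q_i$, which have no sign. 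The standard fixes are either to build the Lyapunov function by integrating along trajectories (using compactness and the absence of recurrence to control the limits), or to proceed inductively over indices using filtrations by isolating blocks; a naive partition-of-unity splice will not do. Since the paper simply imports the theorem, this subtlety never arises there, but if you intend to supply an actual proof you will need to replace that step with one of the correct constructions.
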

\begin{proof}
See Theorem B in \cite{10.2307/1970311}.
\end{proof}
In virtue of the previous theorem to prove the existence of a function $V_k(x)$ satisfying \eqref{lemma_energy_function_eqn1} and \eqref{lemma_energy_function_eqn2} it suffices to show that the vector field $\nabla \varphi_k(x)+b_k(x)$ is gradient-like. This however is not possible since $b_k(x)$ is not differentiable but piece-wise differentiable (c.f. Assumption \ref{assumption_bias}). We consider then a smooth approximation $b^{diff}_k(x)$ of the $b_k(x)$ and show that $\nabla \varphi_k(x)+b^{diff}_k(x)$ is gradient like. We formalize this result in the next lemma thus showing that a self indexing function for the smooth approximation of the vector field of interest exists. 
\begin{lemma}\label{lemma_gradient_like}
Let $\ccalF$ be the free space defined in \eqref{eqn_freespace} verifying Assumption \ref{as_obstacles} and let $\varphi_k:\ccalF\to[0,1]$ be the function defined in \eqref{eqn_navigation_function}. Let $\lambda_{\max}, \lambda_{\min}$ be the bounds from Assumption \ref{assum_objective_function} and $\mu_{\min}^i$ be the minimum eigenvalue of the Hessian of $\beta_i(x)$. Furthermore let \eqref{eqn_condition_general} hold for all $i=1\ldots m$ and let $b_k(x)$ satisfy Assumption \ref{assumption_bias}. Define a smooth approximation $b^{diff}_k(x)$ of $b_k(x)$, then there exists a constant $K$ such that if $k>K$, the vector field $\nabla \varphi_k(x)+b^{diff}_k(x)$ is gradient like.
\end{lemma}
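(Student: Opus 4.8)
The plan is to verify the two requirements in Definition~\ref{def_gradient_like_field} for $g_k:=\nabla\varphi_k+b^{diff}_k$: that its non-wandering set consists of finitely many hyperbolic equilibria, and that the stable and unstable manifolds of those equilibria meet transversally. The scheme is to split $\ccalF$ into small neighborhoods of the critical points of $\varphi_k$, where the smallness of the bias lets a perturbation argument transfer the structure of $\nabla\varphi_k$, and the complement, where a Lyapunov argument rules out any recurrence.

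First I would fix $k$ large enough that $\varphi_k$ is a navigation function (Theorem~\ref{theo_navigation_function}) and that its saddle points $x_c$, which satisfy $\beta(x_c)\le L/k$, lie in the region $\{x:\beta_i(x)<D\text{ for all }i\}$ on which $b_k$ is already differentiable (Assumption~\ref{assumption_bias}); near the interior minimum of $\varphi_k$ I would take $b^{diff}_k$ to be a mollification of $b_k$ on a small ball disjoint from $\partial\ccalF$. In this way $b^{diff}_k$ is smooth, can be taken $C^1$-close to $b_k$ on a neighborhood of every critical point of $\varphi_k$ and $C^0$-close to $b_k$ on all of $\ccalF$, and still vanishes on $\partial\ccalF$. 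Since $-\nabla\varphi_k$ points strictly into $\ccalF$ along $\partial\ccalF$ (admissibility) and $b^{diff}_k=0$ there, the descent field $-g_k$ points strictly inward, so $\ccalF$ is positively invariant; as the non-wandering set, hyperbolicity and transversality are unchanged under reversing time, it suffices to analyze the flow $\dot x=-g_k$, which has no equilibria and no non-wandering points on $\partial\ccalF$.

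Next I would carry out the Lyapunov step. By the $O(1/k)$ decay of $b_k$ (Assumption~\ref{assumption_bias}), for $k$ large the set $\mathcal Z_k=\{x\in\ccalF:\nabla\varphi_k(x)^{T}(\nabla\varphi_k(x)+b_k(x))\le0\}$ is a disjoint union of small neighborhoods of the critical points of $\varphi_k$; choosing $b^{diff}_k$ close enough to $b_k$ keeps $\nabla\varphi_k^{T}(\nabla\varphi_k+b^{diff}_k)>0$ outside a slightly larger, still disjoint family $\mathcal N_k$ of small neighborhoods of those points. On $\ccalF\setminus\mathcal N_k$ the field $g_k$ does not vanish and $\varphi_k$ is a strict Lyapunov function for $\dot x=-g_k$, so no point there is non-wandering. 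Inside each component of $\mathcal N_k$ I would use that $b^{diff}_k$ is $C^1$-small relative to $\nabla\varphi_k$: the map $x\mapsto g_k(x)$ is a $C^1$-small perturbation of $x\mapsto\nabla\varphi_k(x)$, whose zero at the enclosed critical point is nondegenerate, so by the inverse function theorem $g_k$ has a unique zero $x_c^{k}$ in that component, and $Jg_k(x_c^{k})=\nabla^2\varphi_k(x_c)+Jb^{diff}_k(x_c)+o(1)$ has, by \eqref{eqn_quotient_jacobians1}--\eqref{eqn_quotient_jacobians2} (equivalently by the $C^1$-smallness in the Regularity Assumption), the same inertia as $\nabla^2\varphi_k(x_c)$. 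Hence $x_c^{k}$ is hyperbolic with the same Morse index as $x_c$, and the non-wandering set of $g_k$ is the finite hyperbolic set $\{x_c^{k}\}$.

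Finally I would establish transversality. Since $\varphi_k$ is a navigation function its gradient is structurally stable (Assumption~\ref{assumption_bias}, Regularity Assumption; cf.\ \cite{palis1970structural}), hence Morse--Smale, so the stable and unstable manifolds of $\nabla\varphi_k$ intersect transversally. As $b^{diff}_k$ is $C^1$-small near the recurrent set — within the $\varepsilon_k$ of the Regularity Assumption — and controlled by $\varphi_k$ elsewhere, the flow of $-g_k$ is topologically conjugate to that of $-\nabla\varphi_k$ and, on compact pieces, its invariant manifolds are $C^1$-close to those of $-\nabla\varphi_k$ by the stable manifold theorem; transversality being an open condition, it is inherited by $g_k$. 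Combining this with the preceding step shows $g_k$ is gradient-like for all $k$ above some threshold $K$. The step I expect to be the main obstacle is the globalization: the bias is controlled only locally, near the saddles, so the local perturbation analysis at the critical points must be stitched to the global Lyapunov estimate away from them, while simultaneously managing the smooth approximation $b^{diff}_k$ and the manifold-with-boundary technicalities; the persistence of transversality is conceptually the most delicate point, but it is handed to us by the structural stability of $\nabla\varphi_k$.
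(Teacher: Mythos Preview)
Your proposal is correct and follows essentially the same architecture as the paper: split $\ccalF$ into a region away from the critical points of $\varphi_k$, where $\varphi_k$ itself serves as a strict Lyapunov function for $-g_k$ because the $O(1/k)$ bias is dominated by $\nabla\varphi_k$, and small neighborhoods of the critical points, where structural stability of the Morse--Smale field $\nabla\varphi_k$ together with the Regularity Assumption transfers hyperbolicity and transversality to the perturbed field. One device the paper uses that you do not is to first factor out the common positive scalar $(f_0^k+\beta)^{-(1+1/k)}$ and work with the rescaled field $\tilde{\nabla}\varphi_k+\tilde b_k$; this makes the Jacobian near the interior minimum have eigenvalues of order $O(k^0)$, so the size of the linearization region there is $k$-independent and the comparison with $\tilde b_k=O(1/k)$ is immediate, whereas you handle that neighborhood by a direct $C^1$-perturbation/inverse-function-theorem argument. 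Conversely, your treatment is more explicit than the paper's about the boundary behavior, the construction of $b^{diff}_k$, and the transversality step, which the paper leaves largely implicit in its appeal to structural stability.
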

\begin{proof}
Observe that $\nabla \varphi_k(x)$ and $b_k(x)$ share a commun factor $1/(f_0(x)^k+\beta(x))^{1+1/k}$. Since this factor is strictly positive it is equivalent to analyze the vector field
\begin{equation}
\dot{x} = \tilde{\nabla}\varphi_k(x) + \tilde{b}_k(x),
\end{equation}
where $\tilde{\nabla}\varphi_k(x) =(f_0(x)^k+\beta(x))^{1+1/k}\nabla \varphi_k(x)$ and $\tilde{b}_k(x) = (f_0(x)^k+\beta(x))^{1+1/k} b^{diff}_k(x)$. Observe that there exists a region, depending on $k$ away of the critical points of $ \varphi_k(x) $ such that it holds that 
\begin{equation}\label{eqn_region_away_of_critical_points}
\left(\tilde{\nabla} \varphi_k(x) + \tilde{b}_k(x) \right)^T \nabla\varphi_k(x) >0.
\end{equation}
To prove the previous statement, observe that $\|\tilde{b}_k\|$ is strictly decreasing with $k$ (c.f. Assumption \ref{assumption_bias}). Therefore, for any $x$ such that $\|\tilde{\nabla}\varphi_k\|$ is bounded away from zero we have that there exists a $K'$ for which $\tilde{\nabla} \varphi_k(x)$ dominates the term $\tilde{b}_k(x)$ and therefore \eqref{eqn_region_away_of_critical_points} holds. In this region the function $\varphi_k(x)$ is strictly decreasing along the flow of the differential equation $\dot{x} =-\left(\tilde{\nabla} \varphi_k(x)+\tilde{b}_k(x)\right)$ and thus there cannot be recurrences. Therefore, it remains to be shown that the flow is gradient like in the neighborhood of the critical points where \eqref{eqn_region_away_of_critical_points} is not satisfied. Observe that there exists two types of critical points, the minimum of $\varphi_k(x)$ and the saddles. Let us focus on the neighborhood around the minimum first. To that end we compute the Jacobian of $\tilde{\nabla}\varphi_k(x)$ 
\begin{equation}
\beta(x)\nabla^2f_0(x) + \nabla \beta(x)\nabla f_0(x)^T\left(1-\frac{1}{k}\right)-\frac{f_0(x)}{k}\nabla^2 \beta(x).
\end{equation}
It can be shown that for every $\varepsilon>0$ there exists $K$ such that if $k>K$ then the minimum of $\varphi_k(x)$ is at a distance smaller than $\varepsilon$ from the minimum of $f_0(x)$ (c.f. Lemma 2 \cite{PaternainEtal15}). Thus $\beta(x)$ is bounded away from zero and thus the first term in the above equation dominates the Jacobian of $\tilde{\nabla}\varphi_k(x)$. In particular, this implies that the eigenvalues of the Jacobian in a neighborhood of the minimum of $\varphi_k(x)$ are of the order $O(k^0)$.
 Thus the region around the minimum where the linearized system is conjugate to the original is also independent of $k$. This means that for large enough $k$ the region where \eqref{eqn_region_away_of_critical_points} near the minimum is contained in the region where the flow $\dot{x}=-\tilde{\nabla}\varphi_k(x)$ is conjugate to the flow of the linearization. Furthermore, in that region the norm of the linearized field is lower bounded by the eigenvalues of $\nabla^2f_0(x)$ and this bound is independent of $k$. On the other hand, we have that $\lim_{k\to\infty}\|\tilde{b}_k(x)\|_{C^1} =0$. Since the minimum of $\nabla \varphi_k(x)$ is non degenerate the flow $\dot{x}=-\tilde{\nabla}\varphi_k(x)$ is structurally stable and therefore for large enough $k$ $\dot{x}=-\tilde{\nabla}\varphi_k(x) -\tilde{b}_k$ is conjugate to $\dot{x}=-\tilde{\nabla}\varphi_k(x)$. Which means that the vector field $\tilde{\nabla}\varphi_k(x)+\tilde{b}_k$ cannot have recurrences in the neighborhood of the minimum of $ \varphi_k(x) $. We are left to show that the same holds true in the neighborhoods of the saddle points of $\varphi_k(x)$. The latter is a direct consequence of Assumption \ref{assumption_bias} and the fact that $\nabla \varphi_k(x)$ is Morse-Smale, therefore structurally stable. The above completes the proof that the vector field $\tilde{\nabla}\varphi_k(x)+\tilde{b}_k(x)$ is gradient-like. Since the original vector field of interest is the one analyzed times a strictly positive function the same holds for it thus completing the proof of the lemma.
%
%
\end{proof}
The above lemma shows that the vector field $\nabla \varphi_k(x)+b^{diff}_k(x)$ is gradient-like, therefore by virtue of Theorem \ref{theo_energy_function_smale} a function $V_k(x)$ satisfying \eqref{lemma_energy_function_eqn1} and \eqref{lemma_energy_function_eqn2} for an estimate $\hat{g}_t(x_t,\theta_t)$ such that $\E{\hat{g}_t(x_t,\theta_t\Big| \ccalG_t} = \alpha(x_t)\left(\nabla \varphi_k(x)+b^{diff}_k(x)\right)$ exists. The same function satisfies \eqref{lemma_energy_function_eqn1} and \eqref{lemma_energy_function_eqn2} for an estimate $\hat{g}_t(x_t,\theta_t)$ with a piece-wise differentiable bias since its discontinuities are away from the obstacles and thus away of the critical points. This completes the proof of the Lemma.

\bibliographystyle{ieeetr}
\bibliography{bib}


%
\begin{IEEEbiography}[]{Santiago Paternain}  
received the B.Sc. degree in electrical engineering from Universidad de la Rep\'ublica Oriental del Uruguay, Montevideo, Uruguay in 2012. Since August 2013, he has been working toward the Ph.D. degree in the Department of Electrical and Systems Engineering, University of Pennsylvania. His research interests include optimization and control of dynamical systems. 
\end{IEEEbiography}
\begin{IEEEbiography}[]{Alejandro Ribeiro}
received the B.Sc. degree in electrical engineering from the Universidad de la Republica Oriental del Uruguay, Montevideo, in 1998 and the M.Sc. and Ph.D. degree in electrical engineering from the Department of Electrical and Computer Engineering, the University of Minnesota, Minneapolis in 2005 and 2007. From 1998 to 2003, he was a member of the technical staff at Bellsouth Montevideo. After his M.Sc. and Ph.D studies, in 2008 he joined the University of Pennsylvania (Penn), Philadelphia, where he is currently the Rosenbluth Associate Professor at the Department of Electrical and Systems Engineering. His research interests are in the applications of statistical signal processing to the study of networks and networked phenomena. His focus is on structured representations of networked data structures, graph signal processing, network optimization, robot teams, and networked control. Dr. Ribeiro received the 2014 O. Hugo Schuck best paper award, the 2012 S. Reid Warren, Jr. Award presented by Penn's undergraduate student body for outstanding teaching, the NSF CAREER Award in 2010, and paper awards at the 2016 SSP Workshop, 2016 SAM Workshop, 2015 Asilomar SSC Conference, ACC 2013, ICASSP 2006, and ICASSP 2005. Dr. Ribeiro is a Fulbright scholar and a Penn Fellow.
\end{IEEEbiography}

\end{document}